\documentclass[12pt,reqno]{amsart}
\usepackage{amsfonts}
\usepackage{amscd}
\usepackage{amssymb}
\usepackage{amsfonts}
\usepackage{amscd}
\usepackage{amsmath} 
\usepackage{mathrsfs}
\usepackage{dsfont}
\usepackage{enumerate}
\usepackage{xcolor}
\usepackage{float} 
\usepackage[pdftex]{graphicx}
\allowdisplaybreaks
\usepackage{url}
\usepackage[hidelinks]{hyperref} 
 \hypersetup{
     colorlinks,
     linkcolor={red!70!black},
     citecolor={blue!80!black},
     urlcolor={blue!60!black}
 }



\date{\today}
\keywords{Gradient estimate, Drifted Laplacian, Riemannian manifolds, Liouville properties}

\vfuzz2pt 
\hfuzz2pt 
\newtheorem*{theorem*}{Theorem}
\newtheorem{theorem}{Theorem}[section]
\newtheorem{corollary}[theorem]{\bf{Corollary}}
\newtheorem{lemma}[theorem]{Lemma}

\theoremstyle{definition}

\theoremstyle{remark}
\newtheorem{remark}[theorem]{\bf{Remark}}

\numberwithin{equation}{section}

\setlength{\textwidth 6.5in} \setlength{\textheight 9.0in}
\voffset -0.7in \hoffset -0.6in


\usepackage{color}

\title{Gradient estimates and Liouville properties for the drifted Laplacian}

\author{Salvatore Lincastri}
\address{Dipartimento di Matematica e Applicazioni, Universit\`a degli Studi di Milano–Bicocca, Via Cozzi 55, 20125 Milano, Italy}
\email{s.lincastri@campus.unimib.it}

\date{}

\begin{document}
\begin{abstract}
In this paper, we discuss the validity of the Liouville property for $X$-harmonic functions, i.e. positive solution to $\Delta_{X}u=0$, where $X$ is a vector field on a complete, non-compact Riemannian manifold and $\Delta_{X}$ is the drifted Laplacian. In particular, we show that if the $X$-Bakry-\'Emery-Ricci curvature $\mathrm{Ric}_{X}$ is non-negative and the norm of $X$ decays to zero at infinity, then the manifold has the Liouville property for the $X$-Laplacian. The proof exploits a local gradient estimate for positive solutions to the semilinear equation $\Delta_{X}u+F(u)=0$, which holds when $F$ satisfies the structural conditions $tF'(t)-F(t)\le\alpha$ and $\vert F(t)\vert\le\beta t$, and the manifold has $\mathrm{Ric}_{X}\ge-(n-1)K$.
\end{abstract}

\maketitle

\section{Introduction} 
The study of the validity of the Liouville property for positive solutions to PDEs on Riemannian manifolds is one of the most classical question in Differential Geometry and Geometric Analysis. In his attempt to generalize the Liouville theorem for positive harmonic functions on the Euclidean space to the case of complete Riemannian manifolds with non-negative Ricci curvature, Yau developed, in his seminal work \cite{yau1975harmonic}, a maximum principle method to prove a gradient estimate for positive harmonic functions, which implies the Liouville theorem. Later, his argument was localized in the paper \cite{cheng1975differential} with Cheng, where they found a gradient estimate for a broader class of elliptic equations. The maximum principle method was later refined by many authors and applied in a huge variety of situations: to cite some example, we recall the parabolic gradient estimate for Schrodinger operators by Li and Yau (\cite{li1986parabolic}), or the lower bound on the first eigenvalue of the Laplacian on a compact Riemannian manifold (\cite{li1979lower}, \cite{li1980estimates}).

A natural generalization of the notion of harmonic function is the notion of $f$-harmonic function. Consider a smooth metric measure space $(M,g,e^{-f}\mathrm{d}v)$, where d$v$ is the usual Riemannian measure and $f\in C^{\infty}(M)$. In this case, we have an analogue of the Laplace-Beltrami operator, which is the so-called $f$-Laplacian. This operator is defined as
\begin{equation*}
    \Delta_{f}:=\Delta-g(\nabla f,\nabla\cdot),
\end{equation*}
and the solutions to the equation
\begin{equation*}
    \Delta_{f}u=0
\end{equation*}
are called $f$-harmonic functions. Note that when $f$ is constant, one recovers the usual notion of harmonic function. The natural concept of curvature in this case are the $N$-$Bakry$-$Emery$-$Ricci$ $curvature$, defined as the tensor
\begin{equation*}
    \mathrm{Ric}_{f}^{N}:=\mathrm{Ric}+\mathrm{Hess}(f)-\frac{1}{N}\textnormal{d}f\otimes\textnormal{d}f,
\end{equation*}
and the $\infty$-$Bakry$-$Emery$-$Ricci$ $curvature$ (or simply $Bakry$-$Emery$-$Ricci$ $curvature$), which is the tensor
\begin{equation*}
    \mathrm{Ric}_{f}:=\mathrm{Ric}+\mathrm{Hess}(f),
\end{equation*}
where $\mathrm{Ric}$ and $\mathrm{Hess}(f)$ denotes, respectively, the Ricci curvature of $M$ and the Hessian of $f$. In the literature, there are several works in which the maximum principle method is applied to investigate the validity of gradient estimates and Liouville properties for positive $f$-harmonic functions and related equations: see for example \cite{brighton2013liouville}, \cite{Li-ChenGradientActa}, \cite{li2005liouville}, \cite{ma2018gradient}, \cite{mastrolia2010diffusion}, \cite{munteanu2011smooth}.

In this paper, we study analogous gradient estimates and Liouville properties in the non-gradient case. In this context, we do not have a natural structure of metric measure space, but there is still a natural generalization of the Laplace operator and the Ricci curvature. Consider a complete, non-compact Riemannian manifold $(M,g)$ of dimension $n$, and let $X$ be a smooth vector field on $M$. The $drifted$ $Laplacian$, or $X$-$Laplacian$, is the differential operator given by 
\begin{equation*}
    \Delta_{X}:=\Delta-g(X,\nabla\cdot),
\end{equation*}
and an $X$-harmonic function is a solution to the equation
\begin{equation*}
    \Delta_{X}u=0.
\end{equation*} 
Moreover, the $Bakry$-$Emery$-$Ricci$ $curvature$ associated with $X$, or $X$-$Ricci$ $curvature$, is the tensor
\begin{equation*}
    \mathrm{Ric}_{X}:=\mathrm{Ric}+\frac{1}{2}\mathcal{L}_{X}g,
\end{equation*}
where $\mathcal{L}_{X}g$ denotes the Lie derivative of the metric in the direction of $X$. Note that if $X=\nabla f$ for some smooth function $f$ on $M$, we recover the notion of $f$-harmonic functions, and the $X$-Ricci curvature becomes the $\infty$-Bakry-Emery-Ricci curvature of the gradient case, because in this case we have
\begin{equation*}
    \frac{1}{2}\mathcal{L}_{\nabla f}g = \mathrm{Hess}(f).
\end{equation*}  

Our aim is to establish conditions on $X$ that guarantee the validity of the Liouville property when the $X$-Ricci curvature is non-negative. What is known up to now is that, if $\mathrm{Ric}_{X}\ge 0$, then:
\begin{itemize}
    \item if $X\equiv0$, the Liouville property holds by the work \cite{yau1975harmonic} and \cite{cheng1975differential} of Yau and Cheng-Yau;
    \item for a general vector field $X$, the Liouville property holds for positive solutions with $sub-exponential$ $growth$, i.e. such that
    \begin{equation*}
        \limsup_{R\rightarrow+\infty}\frac{\sup_{B_{R}(o)}\log(u+1)}{R}=0.
    \end{equation*}
    This is a consequence of the work \cite{munteanu2011smooth} by Munteanu and Wang (see the discussion in Section 3); moreover, the growth condition on the solution is sharp.
\end{itemize}
In \cite{munteanu2011smooth}, the authors proved that if $(M,g,e^{-f}\mathrm{d}v)$ is a smooth metric measure space with $Ric_{f}\ge0$ and potential $f$ of sub-linear growth, then the space has the Liouville property for the $f$-Laplacian. Note that, in this case, it happens that 
\begin{equation*}
    \liminf_{x\rightarrow\infty}\vert\nabla f\vert=0.
\end{equation*} 
Motivated by this result, we can ask if the Liouville property holds for the $X$-Laplacian when the vector field is bounded and its norm tends to zero at infinity; the answer to this question is given in the main result of this paper, which is the following
\begin{theorem*}
    Let $(M,g)$ be a complete, non-compact Riemannian manifold of dimension $n$, and let $X$ be a smooth vector field. Suppose that $\mathrm{Ric}_{X}\ge 0$ and that there exists a point $o\in M$ such that
    \begin{equation*}
        \vert X\vert\le \Lambda(r(x)) \quad \forall x\in M,
    \end{equation*}
    where $r(x)$ is the distance from $o$ and $\Lambda: [0,+\infty)\longrightarrow [0,+\infty)$ is a continuous function satisfying
    \begin{equation*}
        \lim_{t\rightarrow+\infty}\Lambda(t) = 0.
    \end{equation*}
    Then, every positive $X$-harmonic function defined on $M$ is constant.
\end{theorem*}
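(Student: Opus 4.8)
The plan is to combine the local gradient estimate recalled in the abstract, applied in the case $F\equiv 0$, with the sub-exponential Liouville theorem discussed in Section~3. Since $F\equiv 0$ trivially satisfies $tF'(t)-F(t)=0$ and $|F(t)|=0$, any positive $X$-harmonic function $u$ is a positive solution of $\Delta_X u+F(u)=0$ with $\alpha=\beta=0$; and since $\mathrm{Ric}_X\ge 0$ we may take $K=0$. Thus, on every geodesic ball $B_R(x_0)$ the local gradient estimate produces a bound of the shape
\[
\sup_{B_{R}(x_0)}\frac{|\nabla u|}{u}\ \le\ C_n\Big(\frac{1}{R}+\sup_{B_{2R}(x_0)}|X|\Big),
\]
with $C_n$ depending only on $n$; the exact form of the right-hand side is immaterial, what matters is that it is a sum of terms each of which tends to $0$ when $R\to\infty$ and $\sup_{B_{2R}(x_0)}|X|\to 0$.

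The crucial point is that one must \emph{not} center this estimate at the fixed point $o$, because $\Lambda$ is only assumed to vanish at infinity (and is not assumed monotone), so $\sup_{B_{2R}(o)}|X|$ need not go to $0$. Instead, let $x\in M$ be arbitrary and set $D:=r(x)$; apply the estimate on $B_{D/4}(x)$. Every $y\in B_{D/2}(x)$ satisfies $r(y)\ge D-D/2=D/2$, hence $|X(y)|\le\sup_{t\ge D/2}\Lambda(t)=:\eta(D)$, and $\eta(D)\to 0$ as $D\to\infty$ by hypothesis. Evaluating at the center $x\in B_{D/4}(x)$ gives
\[
|\nabla\log u|(x)\ \le\ C_n\Big(\frac{4}{D}+\eta(D)\Big)\ =:\ \delta(r(x)),
\]
where $\delta(s)\to 0$ as $s\to\infty$; replacing $\delta$ by $s\mapsto\sup_{t\ge s}\delta(t)$ we may assume $\delta$ non-increasing.

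It remains to upgrade this decay of the gradient into a growth bound for $u$. Integrating $\big|\tfrac{d}{dt}\log u(\gamma(t))\big|\le|\nabla\log u|(\gamma(t))\le\delta(t)$ along a unit-speed minimizing geodesic $\gamma$ from $o$ to an arbitrary point of $B_R(o)$ yields $\sup_{B_R(o)}\log(u+1)\le c_0+\int_0^R\delta(t)\,dt$ for some constant $c_0$, and since $\delta(t)\to 0$ forces $\tfrac1R\int_0^R\delta(t)\,dt\to 0$ by Cesàro averaging, we conclude
\[
\limsup_{R\to+\infty}\frac{\sup_{B_R(o)}\log(u+1)}{R}=0 .
\]
Hence $u$ is a positive $X$-harmonic function of sub-exponential growth on a manifold with $\mathrm{Ric}_X\ge 0$, and the Liouville property for such solutions -- the consequence of \cite{munteanu2011smooth} discussed in Section~3 -- forces $u$ to be constant.

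The only genuinely new ingredient, and hence the main obstacle, is recognizing that the local gradient estimate must be applied on balls centered at the target point $x$ with radius comparable to $r(x)$, rather than on a fixed exhaustion $\{B_R(o)\}$: this is precisely what turns the hypothesis ``$|X|\to 0$ at infinity'' into a decaying coefficient in the estimate. Everything afterwards is the elementary geodesic integration above together with the already-known sub-exponential Liouville theorem; the one technical point to keep in mind is that the constant in the local gradient estimate has to be dimensional and not to deteriorate as the radius of the ball grows.
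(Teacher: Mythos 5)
Your proposal is correct. The first and decisive step --- applying the local gradient estimate on balls $B_{r(x)/4}(x)$ centered at the target point $x$ itself, so that the hypothesis $\vert X\vert\le\Lambda(r)$ with $\Lambda\to 0$ turns into a coefficient decaying as $r(x)\to\infty$ --- is exactly the mechanism of the paper's proof (there one fixes $R_k$ with $\vert X\vert\le (k+1)^{-1}$ outside $B_{R_k}(o)$ and applies the estimate on $B_{R_k}(x)$ for $x\notin B_{2R_k}(o)$; this is the same idea with a different parametrization of the radii). Where you diverge is the endgame. The paper, having shown that $Q:=\vert\nabla\log u\vert^{2}$ tends to $0$ at infinity, observes that $Q$ therefore attains an interior maximum, that the $X$-Bochner formula with $\mathrm{Ric}_{X}\ge0$ makes $Q$ subharmonic for the drifted operator associated with $Y=X-2\nabla\log u$, and concludes $Q\equiv0$ by the strong maximum principle --- a self-contained finish using nothing beyond the gradient estimate. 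You instead integrate the pointwise decay $\vert\nabla\log u\vert(x)\le\delta(r(x))$ along minimizing geodesics from $o$, use Ces\`aro averaging to get $\Omega(u)=0$, and invoke the sub-exponential Liouville theorem (Theorem \ref{X-Munteanu-Wang}). This is logically sound and not circular, since that theorem is proved independently of the statement at hand; but it outsources the conclusion to a result whose proof the paper only sketches by reference to Munteanu--Wang, whereas the paper's maximum-principle finish is elementary and self-contained. One trivial repair to your write-up: $\delta(t)=C_{n}(4/t+\eta(t))$ is not integrable at $t=0$, so the geodesic integration should start from $t=1$ (say), with $\sup_{B_{1}(o)}\vert\nabla\log u\vert$ absorbed into the additive constant $c_{0}$; this does not affect the Ces\`aro limit.
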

\noindent This theorem partially extends the result by Munteanu-Wang to the non-gradient case. The main tool for the proof will be the following gradient estimate for positive solutions to the semilinear equation $\Delta_{X}u+F(u)=0$. 
\begin{theorem*}
    Let $(M,g)$ be a complete Riemannian manifold of dimension $n$, and let $X$ be a smooth vector field on $M$. Let $o\in M$ be a fixed point, let $R>0$, and assume that $\mathrm{Ric}_{X}\ge-(n-1)K$ on the ball $B_{2R}(o)$, for some constant $K\ge0$. Let $u\in C^{\infty}(B_{2R}(o))$ be a positive solution to
    \begin{equation*}
        \Delta_{X}u+F(u)=0 \quad \text{on $B_{2R}(o)$},
    \end{equation*}
    where $F\in C^{\infty}([0,+\infty))$ satisfies the structural conditions 
    \begin{align*} 
        &tF'(t)-F(t) \le \alpha t, \\ 
        &\vert F(t)\vert \le\beta t, 
    \end{align*}    
    for some $\alpha\in\mathbb{R}$ and $\beta \ge 0$. Then, for every $x\in B_{R}(o)$ we have the estimate
    \begin{equation*}
         \frac{\vert\nabla u\vert^{2}}{u^{2}}(x) \le C(n)\bigg(\max\bigg\{\alpha+ \frac{3}{2}(n-1)K,0\bigg\} + \beta + \sup_{B_{2R}(o)}\vert X\vert^{2} + \frac{1}{R^{2}}\bigg)
    \end{equation*}
    where $C(n)>0$ is a constant depending only on $n$.
\end{theorem*}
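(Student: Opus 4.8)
\emph{Strategy of the proof.} The plan is to run a localized Cheng--Yau maximum principle argument adapted to the drift. Since $u>0$ on $B_{2R}(o)$, set $h:=\log u$; then
\[
\Delta_X h=-|\nabla h|^2-\frac{F(u)}{u}\qquad\text{on }B_{2R}(o),
\]
and, writing $w:=|\nabla h|^2=|\nabla u|^2/u^2$, the claim becomes the bound $w\le C(n)\big(\max\{\alpha+\tfrac32(n-1)K,0\}+\beta+\sup_{B_{2R}}|X|^2+R^{-2}\big)$ on $B_R(o)$. The driving identity is the weighted Bochner--Weitzenb\"ock formula
\[
\tfrac12\Delta_X|\nabla h|^2=|\mathrm{Hess}\,h|^2+\langle\nabla h,\nabla\Delta_Xh\rangle+\mathrm{Ric}_X(\nabla h,\nabla h),
\]
which follows from the usual Bochner formula once one checks that the extra terms produced by $\Delta-\Delta_X$ and by commuting $\nabla$ past $\langle X,\nabla\cdot\rangle$ reassemble into $\tfrac12\mathcal L_Xg(\nabla h,\nabla h)$.

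Next I would substitute the equation. The first structural hypothesis $tF'(t)-F(t)\le\alpha t$ is exactly what gives $\langle\nabla h,\nabla(F(u)/u)\rangle\le\alpha w$, hence $\langle\nabla h,\nabla\Delta_Xh\rangle\ge-\langle\nabla h,\nabla w\rangle-\alpha w$; the curvature hypothesis gives $\mathrm{Ric}_X(\nabla h,\nabla h)\ge-(n-1)Kw$; and for the Hessian term I would write $\Delta h=\Delta_Xh+\langle X,\nabla h\rangle=-w-F(u)/u+\langle X,\nabla h\rangle$ and use $|\mathrm{Hess}\,h|^2\ge\frac1n(\Delta h)^2$ (or the slightly sharper bound obtained by splitting off the $\nabla h$-direction), together with $|F(u)/u|\le\beta$, $|\langle X,\nabla h\rangle|\le|X|w^{1/2}$ and an elementary Young inequality, to extract a term $\tfrac{1-\varepsilon}{n}w^2$. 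The outcome is a pointwise differential inequality of the form
\[
\tfrac12\Delta_X w\ \ge\ \tfrac{1-\varepsilon}{n}\,w^2-\big(\alpha+(n-1)K\big)w-c(n,\varepsilon)\big(\beta+\sup_{B_{2R}}|X|^2\big)w-\langle\nabla h,\nabla w\rangle .
\]

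For the localization I would use the standard cutoff $\phi=\psi(r/R)$ with $\psi\equiv1$ on $[0,1]$, $\psi\equiv0$ on $[2,\infty)$, $\psi'\le0$, $(\psi')^2\le C\psi$ and $\psi''\ge-C$, so that $|\nabla\phi|^2/\phi\le C/R^2$, and, via the Laplacian comparison for $\Delta_X$, also $\Delta_X\phi\ge-C(n)\big(R^{-2}+K+\sup_{B_{2R}}|X|^2\big)$. I would then apply the maximum principle to $G:=\phi w$: at an interior maximum point $p$ (if $G\equiv0$ there is nothing to prove) the relations $\nabla G(p)=0$ and $\Delta_XG(p)\le0$ let me replace $\langle\nabla\phi,\nabla w\rangle$ and $\langle\nabla h,\nabla w\rangle$ by expressions in $\nabla\phi$; multiplying the differential inequality by $\phi^2$, evaluating at $p$ and collecting the linear-in-$w$ contributions — the Bochner curvature term, combined with $\alpha$ \emph{before} taking positive parts (and, when $\alpha$ is strongly negative, used to absorb part of the comparison contribution), together with the $\Delta_X\phi$ term — yields
\[
\tfrac{1-\varepsilon}{n}\,G^2\ \le\ C(n)\Big(\max\{\alpha+\tfrac32(n-1)K,0\}+\beta+\sup_{B_{2R}}|X|^2+\tfrac1{R^2}\Big)G+\tfrac{C(n)}{R}\,G^{3/2}.
\]
Absorbing $\tfrac{C(n)}{R}G^{3/2}\le\tfrac12\cdot\tfrac{1-\varepsilon}{n}G^2+\tfrac{C(n)}{R^2}G$ by Young, dividing by $G$ and fixing $\varepsilon=\tfrac12$ gives a bound on $G(p)=\max_{B_{2R}(o)}\phi w$, whence $w\le G(p)$ on $B_R(o)$, which is the assertion.

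The main obstacle is the Laplacian comparison in the non-gradient setting: since only $\mathrm{Ric}_X$ (the $\infty$-Bakry--\'Emery tensor) is bounded below, one cannot invoke the classical comparison theorem and must instead derive $\Delta_Xr\le C(n)\big(r^{-1}+\sqrt K+\sup_{B_{2R}}|X|\big)$ — in the barrier sense across the cut locus, the non-smoothness being handled by Calabi's trick — from the Riccati inequality $(\Delta_Xr)'+\tfrac{(\Delta r)^2}{n-1}\le(n-1)K$ together with $(\Delta r)^2\ge\tfrac12(\Delta_Xr)^2-|X|^2$, and then keep track of the constants so that the curvature and drift contributions recombine into $\tfrac32(n-1)K$ and $\sup_{B_{2R}}|X|^2$ respectively. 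The only other delicate point is the constant-sensitive, but routine, absorption of the cross term $\langle\nabla h,\nabla w\rangle$, which at the maximum point becomes an $O(w^{3/2}|\nabla\phi|)$ quantity that must be dominated by the $w^2$ term.
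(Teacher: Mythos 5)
Your strategy coincides with the paper's in all its main components: the substitution $h=\log u$, the $X$-Bochner formula, a drifted Laplacian comparison theorem requiring only a lower bound on $\mathrm{Ric}_X$ plus a bound on $|X|$, Calabi's trick, and the cutoff/maximum-principle argument at an interior maximum of $G=\phi w$. Two of your choices differ in a way worth recording. First, you derive the comparison $\Delta_X r\le C(n)(r^{-1}+\sqrt K+\sup|X|)$ from the pointwise Riccati inequality $(\Delta_X r)'+\frac{(\Delta r)^2}{n-1}\le (n-1)K$ closed via $(\Delta r)^2\ge\frac12(\Delta_X r)^2-|X|^2$; the paper instead localizes Qian's integrated test-function argument (multiplying by $h^2=(\mathrm{sn}_{-K}(t)-\mathrm{sn}_{-K}(\delta))^2$ and integrating by parts so that the drift enters only through boundary terms). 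Both work; yours degrades the constant in front of $r^{-1}$ but that is harmless. Second, at the final algebraic stage you absorb the cross terms by Young's inequality into an $\varepsilon$-loss on the $w^2$ coefficient and end with a quadratic inequality in $G$, whereas the paper keeps the full square $\big(|\nabla h|^2+\frac{F(u)}{u}-\langle X,\nabla h\rangle\big)^2$, expands it, and is led to a genuine cubic $t^3+pt+q\le 0$ in $t=G^{1/2}$, whose positive root it bounds by $\frac{2}{\sqrt3}\sqrt{-p}$ after checking the discriminant is non-negative.

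The one place where your write-up is materially incomplete is precisely the term that forces the paper into that cubic analysis: when $\beta>0$ and $X\not\equiv 0$, the expansion of the square produces the contribution $-\frac{4}{n}\frac{F(u)}{u}\langle X,\nabla h\rangle\ge -\frac{4}{n}\beta\Lambda\, w^{1/2}$, which is neither quadratic nor linear in $w$ and does not appear in your displayed final inequality $\frac{1-\varepsilon}{n}G^2\le C(\cdots)G+\frac{C(n)}{R}G^{3/2}$. If you simply divide that inequality by $G$ after adding the missing $C\Lambda\beta G^{1/2}$ (or, after Young, an additive constant of order $\beta\Lambda^2$ or $\beta^2$), the argument does not close as stated. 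It can be repaired within your framework: keep the zeroth-order term, solve $G^2\le aG+b\Rightarrow G\le a+\sqrt b$, and then use the weighted AM--GM inequalities $\sqrt{\beta}\,\Lambda\le\frac12(\beta+\Lambda^2)$ or $(\Lambda\beta)^{2/3}\le\frac13\Lambda^2+\frac23\beta$ to fold $\sqrt b$ back into $C(n)(\beta+\Lambda^2)$. You should make this step explicit; everything else in your outline matches the paper's proof.
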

\noindent In the context of $X$-harmonic functions (which corresponds to the case $\alpha=\beta=0$), a gradient estimate of this type was found by Gonzales and Negrin in \cite{gonzalez1999gradient}, where the authors exploited the maximum principle method to establish a gradient estimate for positive solutions to the parabolic equation associated with the $X$-Laplacian (see also \cite{cranston1991gradient}, \cite{thalmaier1998gradient} for Schauder-type gradient estimates). Their proof requires a lower bound on the Riemannian Ricci curvature, an upper bound on the norm of $X$, and an upper bound on its covariant derivative $\nabla X$ on a ball $B_{2R}(o)$ of center $o\in M$ and radius $R$. Then, we extend their estimate by requiring only a lower bound on the $X$-Bakry-Emery-Ricci curvature, which is implied by the combination of the bounds on the Riemannian Ricci curvature of $M$ and on the covariant derivative of $X$. 

The paper is organized as follows: in Section 2, we recall the Bochner formula for the $X$-Laplacian, and we prove a local comparison theorem for the $X$-Laplacian of the distance function. In Section 3, we give a proof of the gradient estimate and we see some natural consequences. Finally, in Section 4 we discuss the validity of the Liouville property for the $X$-Laplacian in the context of non-negative $X$-Ricci curvature, and we prove our Liouville theorem.

\section{Preliminary results}
In this section, we introduce the main tools for the proof of the gradient estimate.
Let $(M,g)$ be a complete, non-compact Riemannian manifold of dimension $n$, and let $X$ be a smooth vector field on $M$. The first ingredient we need is a version of the Bochner formula for the $X$-Laplacian, which is contained in the following 
\begin{lemma}\label{X-Bochner}
    Let $(M,g)$ be a Riemannian manifold, let $X$ be a smooth vector field on $M$, and let $u\in C^{3}(M)$. Then
    \begin{equation}\label{formula X-Bochner}
        \frac{1}{2}\Delta_{X}\vert\nabla u\vert^{2} = \vert \mathrm{Hess}(u)\vert^{2}+g(\nabla u,\nabla\Delta_{X}u)+\mathrm{Ric}_{X}(\nabla u, \nabla u).
    \end{equation}
\end{lemma}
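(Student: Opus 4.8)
The plan is to derive the $X$-Bochner formula from the classical Bochner formula by tracking the correction terms introduced by the drift $X$. First I would recall the standard Bochner–Weitzenböck identity for the Laplace–Beltrami operator: for $u\in C^{3}(M)$,
\begin{equation*}
    \tfrac{1}{2}\Delta\vert\nabla u\vert^{2} = \vert\mathrm{Hess}(u)\vert^{2}+g(\nabla u,\nabla\Delta u)+\mathrm{Ric}(\nabla u,\nabla u).
\end{equation*}
The goal is then to rewrite both sides so that $\Delta$ is replaced by $\Delta_{X}=\Delta-g(X,\nabla\cdot)$ and $\mathrm{Ric}$ is replaced by $\mathrm{Ric}_{X}=\mathrm{Ric}+\tfrac{1}{2}\mathcal{L}_{X}g$.

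The key computation is to handle the two drift corrections. On the left-hand side, $\tfrac{1}{2}\Delta_{X}\vert\nabla u\vert^{2} = \tfrac{1}{2}\Delta\vert\nabla u\vert^{2} - \tfrac{1}{2}g(X,\nabla\vert\nabla u\vert^{2})$, and since $\tfrac{1}{2}\nabla\vert\nabla u\vert^{2} = \mathrm{Hess}(u)(\nabla u,\cdot)^{\sharp}$ we get the extra term $-\mathrm{Hess}(u)(X,\nabla u)$. On the right-hand side, the term $g(\nabla u,\nabla\Delta u)$ must become $g(\nabla u,\nabla\Delta_{X}u)$; expanding $\nabla\Delta_{X}u = \nabla\Delta u - \nabla(g(X,\nabla u))$, the discrepancy is $g(\nabla u,\nabla(g(X,\nabla u)))$. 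I would compute this last quantity in an orthonormal frame: $g(\nabla u,\nabla(g(X,\nabla u))) = \nabla u^{j}\nabla_{j}(X^{i}\nabla_{i}u) = \nabla u^{j}(\nabla_{j}X^{i})\nabla_{i}u + \nabla u^{j}X^{i}\nabla_{j}\nabla_{i}u$. The second piece is exactly $\mathrm{Hess}(u)(X,\nabla u)$, which cancels the correction coming from the left-hand side. The first piece is $(\nabla X)(\nabla u,\nabla u)$, and here I would use that the symmetric part of $\nabla X$ is precisely $\tfrac{1}{2}\mathcal{L}_{X}g$, so that contracting against the symmetric tensor $\nabla u\otimes\nabla u$ gives $(\nabla X)(\nabla u,\nabla u) = \tfrac{1}{2}(\mathcal{L}_{X}g)(\nabla u,\nabla u)$; combining with $\mathrm{Ric}(\nabla u,\nabla u)$ yields $\mathrm{Ric}_{X}(\nabla u,\nabla u)$.

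Assembling all pieces: starting from the classical formula and subtracting $\tfrac{1}{2}g(X,\nabla\vert\nabla u\vert^{2})$ from both sides, the left becomes $\tfrac{1}{2}\Delta_{X}\vert\nabla u\vert^{2}$; on the right, replacing $g(\nabla u,\nabla\Delta u)$ by $g(\nabla u,\nabla\Delta_{X}u) + g(\nabla u,\nabla(g(X,\nabla u)))$ and using the identities above, the $\mathrm{Hess}(u)(X,\nabla u)$ terms cancel and the remaining curvature-type term assembles into $\mathrm{Ric}_{X}(\nabla u,\nabla u)$. This gives exactly \eqref{formula X-Bochner}.

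The only mildly delicate point — which I would treat carefully but which is not really an obstacle — is the bookkeeping of the term $g(\nabla u,\nabla(g(X,\nabla u)))$ and the observation that the antisymmetric part of $\nabla X$ drops out when contracted against the symmetric tensor $\nabla u\otimes\nabla u$, so that only $\tfrac{1}{2}\mathcal{L}_{X}g$ survives. Since $u\in C^{3}$, all the manipulations with third derivatives and the symmetry $\nabla_{j}\nabla_{i}u=\nabla_{i}\nabla_{j}u$ are justified, and the identity holds pointwise on all of $M$.
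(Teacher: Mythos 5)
Your proposal is correct and follows essentially the same route as the paper: both start from the classical Bochner formula, reduce the identity to the single drift term $g(\nabla u,\nabla(g(X,\nabla u)))$, and expand it in a frame as $\tfrac{1}{2}\mathcal{L}_{X}g(\nabla u,\nabla u)+\tfrac{1}{2}g(X,\nabla\vert\nabla u\vert^{2})$, using that only the symmetric part of $\nabla X$ survives the contraction against $\nabla u\otimes\nabla u$. The bookkeeping is organized slightly differently (you track cancellations term by term, the paper shows the combined correction vanishes), but the content is identical.
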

\begin{proof}
    We provide a proof for the sake of completeness. By the classical Bochner formula we have
    \begin{equation*}
        \frac{1}{2}\Delta\vert\nabla u\vert^{2}=\vert \mathrm{Hess}(u)\vert^{2}+g(\nabla u,\nabla\Delta u)+\mathrm{Ric}(\nabla u,\nabla u),
    \end{equation*}
    and applying the definition of $\Delta_{X}$ and $\mathrm{Ric}_{X}$ we get
    \begin{equation*}
    \begin{split}
        \frac{1}{2}\Delta_{X}\vert\nabla u\vert^{2}+\frac{1}{2}g(X,\nabla\vert\nabla u\vert^{2})&=\vert \mathrm{Hess}(u)\vert^{2}+g(\nabla u,\nabla\Delta_{X} u)\\
        &+g(\nabla u,\nabla g(X,\nabla u))+\mathrm{Ric}_{X}(\nabla u,\nabla u)-\frac{1}{2}\mathcal{L}_{X}g(\nabla u,\nabla u).
    \end{split}
    \end{equation*}
    To prove (\ref{formula X-Bochner}), we show that
    \begin{equation*}
        g(\nabla u,\nabla g(X,\nabla u))-\frac{1}{2}\mathcal{L}_{X}g(\nabla u,\nabla u)-\frac{1}{2}g(X,\nabla\vert\nabla u\vert^{2})=0.
    \end{equation*}
    Fix an orthonormal frame $\{e_{i}\}_{i=1,\dots,n}$ and let $\{\theta^{i}\}_{i=1,\dots,n}$ be the dual orthonormal coframe. On these frames, the components of the gradient of $g(X,\nabla u)$ are
    \begin{equation*}
        (g(X,\nabla u))_{i}=(X^{j}u_{j})_{i}=X^{j}_{i}u_{j}+X^{j}u_{ij},
    \end{equation*}
    where $u_{i}$, $u_{ij}$, $X^{i}$ and $X_{j}^{i}$ are the components, respectively, of $\nabla u$, $\mathrm{Hess}(u)$, $X$ and $\nabla X$ (note that we are using the Einstein summation convention).
    Since
    \begin{equation*}
        \mathcal{L}_{X}g(Y,Z) = (X^{i}_{j}+X^{j}_{i})Y^{i}Z^{j}
    \end{equation*}
    for every couple of vector fields $Y$ and $Z$, we find
    \begin{equation*}
    \begin{split}
        g(\nabla u,\nabla(g(X,\nabla u)))&=u_{i}(X^{j}_{i}u_{j}+X^{j}u_{ij})=\frac{1}{2}(X^{i}_{j}+X^{j}_{i})u_{i}u_{j}+u_{ij}X^{j}u_{i} \\
        &=\frac{1}{2}\mathcal{L}_{X}g(\nabla u,\nabla u)+\frac{1}{2}X^{j}(u^{2}_{i})_{j} \\
        &=\frac{1}{2}\mathcal{L}_{X}g(\nabla u,\nabla u)+\frac{1}{2}g(X,\nabla\vert\nabla u\vert^{2}).
    \end{split}
    \end{equation*}
\end{proof}
\noindent The second result we need is a local comparison theorem for the $X$-Laplacian of the distance function; the proof is obtained by localizing an argument due to Qian (see \cite{qian1996conservation}) and using the appropriate test function. Let $K\ge0$, and recall that $\mathrm{sn}_{-K}(t)$ is the function
    \begin{equation*}
        \mathrm{sn}_{-K}(t):=
        \begin{cases}
            t &\text{if $K=0$}, \\
            \frac{1}{\sqrt{K}}\sinh(\sqrt{K}t) &\text{if $K>0$}.
        \end{cases}
    \end{equation*}
We have the following
\begin{theorem}\label{X-comparison for vector field}
    Let $(M,g)$ be a complete Riemannian manifold of dimension $n$, and let $X$ be a smooth vector field on $M$. Fix a point $o\in M$ and denote by $r$ the distance function from $o$. If $\mathrm{Ric}_{X}\ge -(n-1)K$ on $B_{R}(o)$ for some constant $K\ge0$, then we have the estimate
    \begin{equation}
        \Delta_{X}r \le (n-1)\frac{\textnormal{sn}_{-K}'(r)}{\textnormal{sn}_{-K}(r)} + \sup_{B_{R}(o)}\vert X\vert \quad\forall x\in B_{R}(o)\setminus (cut(o)\cup\{o\}),
    \end{equation}
    where $cut(o)$ denotes the cut locus of $o$.
\end{theorem}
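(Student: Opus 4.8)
The plan is to fix a point $x\in B_{R}(o)\setminus(cut(o)\cup\{o\})$, let $\gamma\colon[0,r(x)]\to M$ be the unique minimizing geodesic from $o$ to $x$, and derive a Riccati‑type differential inequality along $\gamma$. First I would record two elementary facts: the whole geodesic $\gamma$ lies in $B_{R}(o)$, because every subarc $\gamma|_{[0,t]}$ is again minimizing, so $r(\gamma(t))=t<r(x)<R$; consequently both $\mathrm{Ric}_{X}\ge-(n-1)K$ and $|X|\le\Lambda:=\sup_{B_{R}(o)}|X|$ hold along $\gamma$. Moreover, since $x\notin cut(o)$, the distance function $r$ is smooth on an open neighborhood of $\gamma((0,r(x)])$, so the functions $m(t):=\Delta r(\gamma(t))$ and $m_{X}(t):=\Delta_{X}r(\gamma(t))$ are smooth for $t\in(0,r(x)]$, and along $\gamma$ the radial derivative $\partial_{r}$ coincides with $\tfrac{d}{dt}$.

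Next I would apply the $X$‑Bochner formula of Lemma~\ref{X-Bochner} to $u=r$ on the open set where $r$ is smooth. Since $|\nabla r|\equiv1$, the left‑hand side vanishes and one gets $0=|\mathrm{Hess}(r)|^{2}+\partial_{r}(\Delta_{X}r)+\mathrm{Ric}_{X}(\nabla r,\nabla r)$. Because $\mathrm{Hess}(r)$ annihilates $\nabla r$, it is supported on an $(n-1)$‑dimensional block, so Cauchy--Schwarz gives $|\mathrm{Hess}(r)|^{2}\ge\frac{(\Delta r)^{2}}{n-1}$; using also $\mathrm{Ric}_{X}(\nabla r,\nabla r)\ge-(n-1)K$ along $\gamma$, we obtain
\[
m_{X}'(t)\le-\frac{m(t)^{2}}{n-1}+(n-1)K,\qquad t\in(0,r(x)].
\]

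The crucial step is to convert this into a closed differential inequality for $m_{X}$ and then compare with the model. At any $t$ with $m_{X}(t)\ge\Lambda$ we have $\Delta r=\Delta_{X}r+g(X,\nabla r)\ge m_{X}(t)-\Lambda\ge0$, hence $m(t)^{2}\ge(m_{X}(t)-\Lambda)^{2}$ and therefore $m_{X}'(t)\le-\frac{(m_{X}(t)-\Lambda)^{2}}{n-1}+(n-1)K$. Let $h(t):=(n-1)\frac{\mathrm{sn}_{-K}'(t)}{\mathrm{sn}_{-K}(t)}$; it is positive, solves the model Riccati equation $h'=-\frac{h^{2}}{n-1}+(n-1)K$, and behaves like $\frac{n-1}{t}$ as $t\to0^{+}$. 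Since near $o$ one has the classical expansion $\Delta r(\gamma(t))=\frac{n-1}{t}+O(t)$, it follows that $\limsup_{t\to0^{+}}(m_{X}(t)-h(t)-\Lambda)\le0$. A standard Riccati comparison now yields $m_{X}(t)\le h(t)+\Lambda$ for all $t\in(0,r(x)]$: fixing $\varepsilon>0$, if $m_{X}(t_{0})>h(t_{0})+\Lambda+\varepsilon$ for some $t_{0}$, let $t_{1}$ be the largest $t<t_{0}$ with $m_{X}(t)=h(t)+\Lambda+\varepsilon$; on $(t_{1},t_{0}]$ we have $m_{X}>h+\Lambda+\varepsilon>\Lambda$, so the closed inequality applies there, and a Gr\"onwall argument for $w:=m_{X}-\Lambda-h$ (which satisfies $w'\le-\tfrac{(m_{X}-\Lambda)+h}{n-1}\,w$) gives $w(t)\le w(t_{1})=\varepsilon$ on $(t_{1},t_{0}]$, contradicting $w(t_{0})>\varepsilon$; letting $\varepsilon\to0$ finishes. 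Evaluating $m_{X}(t)\le h(t)+\Lambda$ at $t=r(x)$ gives exactly the claimed estimate.

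I expect the main obstacle to be the sign bookkeeping in the third step. The differential inequality coming out of Bochner controls $(\Delta r)^{2}$, but a priori neither $\Delta r$ nor $\Delta_{X}r$ need be nonnegative, so one cannot simply discard the term $g(X,\nabla r)$; the remedy is to run the comparison only on the super‑level set $\{m_{X}>h+\Lambda+\varepsilon\}$, where $\Delta r\ge0$ is automatic. The only other delicate point is pinning down the correct behaviour of $m_{X}$ at $o$ (the ``initial condition'' for the Riccati comparison), which is handled by the normal‑coordinate asymptotics $\Delta r=\frac{n-1}{r}+O(r)$; equivalently, following Qian one can integrate the differential inequality against the test function $\mathrm{sn}_{-K}(r)^{n-1}$ starting from a small radius and let that radius tend to $0$.
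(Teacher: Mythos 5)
Your proof is correct, but it follows a genuinely different route from the paper. The paper localizes Qian's integral argument: it starts from the classical Riccati inequality for $\Delta r$ along $\gamma$, multiplies by a test function $h^{2}$ vanishing at an inner radius $\delta$, integrates by parts and absorbs the quadratic term $(\Delta r)^{2}/(n-1)$ via Young's inequality, then substitutes $\mathrm{Ric}=\mathrm{Ric}_{X}-\tfrac12\mathcal{L}_{X}g$ and uses the identity $\tfrac{d}{dt}g_{\gamma(t)}(X,\dot\gamma)=\tfrac12\mathcal{L}_{X}g(\dot\gamma,\dot\gamma)$ to integrate the Lie-derivative term by parts as well; choosing $h(t)=\mathrm{sn}_{-K}(t)-\mathrm{sn}_{-K}(\delta)$ and letting $\delta\to0$ produces the stated bound with no sign discussion whatsoever, since Young's inequality absorbs $(\Delta r)^{2}$ regardless of the sign of $\Delta r$ or $\Delta_{X}r$. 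You instead work pointwise: you extract a Riccati inequality for $m_{X}=\Delta_{X}r\circ\gamma$ directly from the $X$-Bochner formula, close it on the superlevel set $\{m_{X}>h+\Lambda+\varepsilon\}$ (where $\Delta r\ge0$ is automatic, so $(\Delta r)^{2}\ge(m_{X}-\Lambda)^{2}$), and run a barrier/Gr\"onwall comparison against $h+\Lambda$ with the initial condition supplied by the normal-coordinate asymptotics $\Delta r=\frac{n-1}{t}+O(t)$. Your version is closer to the classical ODE proof of the Laplacian comparison theorem and makes transparent that $\Lambda$ enters as a constant shift of the barrier, at the cost of the careful sign bookkeeping and the analysis at $t\to0^{+}$; the paper's weighted-integral version trades that for two integrations by parts and dispenses with any case analysis, which is also why it adapts more readily to weaker or non-uniform curvature bounds. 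Both arguments are complete and yield the identical estimate.
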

\begin{proof}
    Fix a point $x\in B_{R}(o)\setminus(cut(o)\cup\{o\})$, set $\ell:=r(x)$ and let $\gamma: [0,\ell]\longrightarrow M$ be a minimizing geodesic connecting $o$ and $x$, parameterized by $g$-arclength. By Gauss lemma, we have $\nabla r(\gamma(t))=\dot{\gamma}(t)$ for every $t\in(0,\ell]$, and a simple computation shows that
    \begin{equation*}
        \frac{\textnormal{d}}{\textnormal{d}t}g_{\gamma(t)}(X,\nabla r)=\frac{1}{2}\mathcal{L}_{X}g(\dot{\gamma}(t),\dot{\gamma}(t)).
    \end{equation*}
    The first step is to obtain an estimate for the Laplacian of $r$ in $x$. Fix $\delta\in[0,\ell)$ and let $h\in C^{0,1}([\delta,\ell])$ be a non-negative Lipschitz function satisfying $h(\delta)=0$. Applying the Bochner formula to $r$ and restricting along $\gamma(t)$, we find 
    \begin{equation*}
    \begin{split}
        0 &= \vert \mathrm{Hess}(r)\vert^{2}+g(\nabla r,\nabla\Delta r) + \mathrm{Ric}(\nabla r,\nabla r) \\
        &\ge \frac{(\Delta r \circ  \gamma(t))^{2}}{n-1} + \frac{\textnormal{d}}{\textnormal{d}t}(\Delta r \circ \gamma(t))+\mathrm{Ric}(\dot{\gamma}(t),\dot{\gamma}(t)),
    \end{split}
    \end{equation*}
    hence
    \begin{equation*}
          \frac{\textnormal{d}}{\textnormal{d}t}(\Delta r \circ \gamma(t)) + \frac{(\Delta r \circ  \gamma(t))^{2}}{n-1}\le -\mathrm{Ric}(\dot{\gamma}(t),\dot{\gamma}(t)).
    \end{equation*}
    Multiplying this equation by $h^{2}(t)$ and integrating from $\delta$ to $\ell$, we obtain
    \begin{equation*}
        \int_{\delta}^{\ell}h^{2}(t)\frac{\textnormal{d}}{\textnormal{d}t}(\Delta r \circ \gamma(t))\ \textnormal{d}t + \int_{\delta}^{\ell} h^{2}(t)\frac{(\Delta r \circ  \gamma(t))^{2}}{n-1} \textnormal{d}t \le -\int_{\delta}^{\ell} h^{2}(t)\mathrm{Ric}(\dot{\gamma}(t),\dot{\gamma}(t))\ \textnormal{d}t,
    \end{equation*}
    and after an integration by parts we find
    \begin{equation*}
        h^{2}(\ell)\Delta r(x) -\int_{\delta}^{\ell}2h'(t)h(t)(\Delta r \circ \gamma(t))\ \textnormal{d}t + \int_{\delta}^{\ell} h^{2}(t)\frac{(\Delta r \circ  \gamma(t))^{2}}{n-1} \textnormal{d}t \le -\int_{\delta}^{\ell} h^{2}(t)\mathrm{Ric}(\dot{\gamma}(t),\dot{\gamma}(t))\ \textnormal{d}t.
    \end{equation*}
    From Young inequality, we have
    \begin{equation*}
        2h'(t)h(t)(\Delta r \circ \gamma(t)) \le (n-1)(h'(t))^{2}+h^{2}(t)\frac{(\Delta r \circ \gamma(t))^{2}}{n-1},
    \end{equation*}
    and inserting this in the previous formula we obtain
    \begin{equation}\label{equazione (2) comparison}
        h^{2}(\ell)\Delta r(x) \le \int_{\delta}^{\ell}(n-1)(h'(t))^{2}\ \textnormal{d}t-\int_{\delta}^{\ell} h^{2}(t)\mathrm{Ric}(\dot{\gamma}(t),\dot{\gamma}(t))\ \textnormal{d}t.
    \end{equation}
    Formula (\ref{equazione (2) comparison}) and the definition of $\mathrm{Ric}_{X}$ then imply the following estimate for the Laplacian of $r$:
    \begin{equation*}
    \begin{split}
        h^{2}(\ell)\Delta r(x) &\le \int_{\delta}^{\ell}(n-1)(h'(t))^{2}\ \textnormal{d}t-\int_{\delta}^{\ell} h^{2}(t)\mathrm{Ric}_{X}(\dot{\gamma}(t),\dot{\gamma}(t))\ \textnormal{d}t+\int_{\delta}^{\ell}\frac{1}{2}h^{2}(t)\mathcal{L}_{X}g(\dot{\gamma}(t),\dot{\gamma}(t))\ \textnormal{d}t \\
        &= \int_{\delta}^{\ell}(n-1)(h'(t))^{2}\ \textnormal{d}t-\int_{\delta}^{\ell} h^{2}(t)\mathrm{Ric}_{X}(\dot{\gamma}(t),\dot{\gamma}(t))\ \textnormal{d}t +\int_{\delta}^{\ell}h^{2}(t)\frac{\textnormal{d}}{\textnormal{d}t}g_{\gamma(t)}(X,\dot{\gamma}(t))\ \textnormal{d}t \\
        &= \int_{\delta}^{\ell}(n-1)(h'(t))^{2}\ \textnormal{d}t-\int_{\delta}^{\ell} h^{2}(t)\mathrm{Ric}_{X}(\dot{\gamma}(t),\dot{\gamma}(t))\ \textnormal{d}t + h^{2}(\ell)g_{x}(X,\nabla r)\\
        &-\int_{\delta}^{\ell}2h'(t)h(t)g_{\gamma(t)}(X,\dot{\gamma}(t))\ \textnormal{d}t.
    \end{split}
    \end{equation*}
    By definition of $\Delta_{X}$ and exploiting the bound on $\mathrm{Ric}_{X}$, we get
    \begin{equation}\label{equazione (4) comparison}
    \begin{split}
        h^{2}(\ell)\Delta_{X} r(x) &= \int_{\delta}^{\ell}(n-1)(h'(t))^{2}\ \textnormal{d}t-\int_{\delta}^{\ell} h^{2}(t)\mathrm{Ric}_{X}(\dot{\gamma}(t),\dot{\gamma}(t))\ \textnormal{d}t \\
        &-\int_{\delta}^{\ell}2h'(t)h(t)g_{\gamma(t)}(X,\nabla r)\ \textnormal{d}t \\
        & \le (n-1)\int_{\delta}^{\ell}[(h'(t))^{2} + K h^{2}(t)]\ \textnormal{d}t+2\Lambda\int_{\delta}^{\ell}\vert h'(t)h(t)\vert \ \textnormal{d}t,
    \end{split}
    \end{equation}
    where $\Lambda$ is the supremum of $\vert X\vert$ on the ball $B_{R}(o)$. If we choose $h(t):=\textnormal{sn}_{-K}(t)-\textnormal{sn}_{-K}(\delta)$ as the test function in (\ref{equazione (4) comparison}), and if we send $\delta$ to 0, we find
    \begin{equation*}
    \begin{split}
        \Delta_{X} r(x) \le (n-1)\frac{\textnormal{sn}_{-K}'(\ell)}{\textnormal{sn}_{-K}(\ell)}+\Lambda,
    \end{split}
    \end{equation*}
    which is the desired estimate.
\end{proof}

\begin{remark}
    When the vector field $X$ is bounded, Theorem \ref{X-comparison for vector field} gives a global comparison theorem, and the inequality becomes
    \begin{equation*}
        \Delta_{X}r \le (n-1)\frac{\textnormal{sn}_{-K}'(r)}{\textnormal{sn}_{-K}(r)} + \Lambda \quad \forall x\in M\setminus (cut(o)\cup\{o\}),
    \end{equation*}
    where $\Lambda\ge0$ is the bound on the norm of $X$.
\end{remark}
\begin{remark}
It is standard to see that the estimate in the comparison theorem implies 
    \begin{equation}\label{X-comparison estimate}
        \Delta_{X}r \le \frac{n-1}{r}+(n-1)\sqrt{K} + \Lambda
    \end{equation}
    for every point $x\in B_{R}(o)\setminus (cut(o)\cup\{o\})$. This is the formulation we need in the next section.
\end{remark}

\section{Gradient estimate for the $X$-Laplacian}
Let $(M,g)$ be a complete, non-compact Riemannian manifold of dimension $n$, and let $X$ be a smooth vector field on $M$. In this section, we prove a gradient estimate for positive solutions to the semilinear equation
\begin{equation}\label{semilinear X-lap}
    \Delta_{X}u+F(u)=0,
\end{equation}
where $F\in C^{\infty}([0,+\infty))$ satisfies the structural conditions 
    \begin{align} 
        &tF'(t)-F(t) \le \alpha t, \label{struttura 1 F} \\ 
        &\vert F(t)\vert \le\beta t, \label{struttura 2 F}
    \end{align}
for some $\alpha\in\mathbb{R}$ and $\beta \ge 0$. The proof is obtained by applying the maximum principle method, exploiting the $X$-Bochner formula and the $X$-Laplacian comparison theorem instead of the classical ones.
\begin{theorem}\label{general gradient estimate for X-Lap}
    Let $(M,g)$ be a complete Riemannian manifold of dimension $n$, and let $X$ be a smooth vector field on $M$. Let $o\in M$ be a fixed point, let $R>0$, and assume that $\mathrm{Ric}_{X}\ge-(n-1)K$ on the ball $B_{2R}(o)$, for some constant $K\ge0$. Let $u\in C^{\infty}(B_{2R}(o))$ be a positive solution to
    \begin{equation*}
        \Delta_{X}u+F(u)=0 \quad \text{on $B_{2R}(o)$},
    \end{equation*}
    where $F\in C^{\infty}([0,+\infty))$ satisfies the structural conditions (\ref{struttura 1 F}) and (\ref{struttura 2 F})    
    for some $\alpha\in\mathbb{R}$ and $\beta \ge 0$. Then, for every $x\in B_{R}(o)$ we have the estimate
    \begin{equation}\label{gradient estimate general X}
         \frac{\vert\nabla u\vert^{2}}{u^{2}}(x) \le C(n)\bigg(\max\bigg\{\alpha+ \frac{3}{2}(n-1)K,0\bigg\} + \beta + \sup_{B_{2R}(o)}\vert X\vert^{2} + \frac{1}{R^{2}}\bigg)
    \end{equation}
    where $C(n)>0$ is a constant depending only on $n$.
\end{theorem}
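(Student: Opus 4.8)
The plan is to adapt the Cheng--Yau maximum-principle scheme, using the drifted Bochner formula of Lemma~\ref{X-Bochner} and the drifted Laplacian comparison \eqref{X-comparison estimate} in place of their classical analogues. Since $u>0$ one may set $f:=\log u$; from $\Delta_X f=\Delta_X u/u-|\nabla u|^2/u^2$ and the equation one gets $\Delta_X f=-F(u)/u-|\nabla f|^2$. Write $w:=|\nabla f|^2=|\nabla u|^2/u^2$, so that the goal is to bound $\sup_{B_R(o)}w$. Applying Lemma~\ref{X-Bochner} to $f$, I would estimate the three terms on the right-hand side as follows: for $g(\nabla f,\nabla\Delta_X f)$, write $\nabla\Delta_X f=-(F(u)/u)'\nabla u-\nabla w$ and use that $u\,(F(u)/u)'=(uF'(u)-F(u))/u\le\alpha$ by \eqref{struttura 1 F}, which gives $g(\nabla f,\nabla\Delta_X f)\ge-\alpha w-g(\nabla f,\nabla w)$; use $\mathrm{Ric}_X(\nabla f,\nabla f)\ge-(n-1)K\,w$; and bound $|\mathrm{Hess}(f)|^2\ge(\Delta f)^2/n$ together with $\Delta f=\Delta_X f+g(X,\nabla f)=-F(u)/u-w+g(X,\nabla f)$. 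Expanding the square, using $|F(u)/u|\le\beta$ from \eqref{struttura 2 F}, $|g(X,\nabla f)|\le|X|\,w^{1/2}$, and Young's inequality on the resulting $w^{3/2}$-term, I obtain a differential inequality of the form
\[
\tfrac12\Delta_X w+g(\nabla f,\nabla w)\ \ge\ \tfrac{c_1}{n}\,w^2-\Big(\alpha+(n-1)K+c_2\,\beta+c_3\sup_{B_{2R}(o)}|X|^2\Big)w\quad\text{on }B_{2R}(o),
\]
with positive constants $c_1\le 1$ and $c_2,c_3$ depending only on $n$.

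Next comes the localization. I would fix $\varphi:=\eta(r/R)$ with $\eta\in C^\infty([0,\infty))$, $\eta\equiv1$ on $[0,1]$, $\eta\equiv0$ on $[2,\infty)$, $-C\sqrt\eta\le\eta'\le0$, $|\eta''|\le C$, so that $|\nabla\varphi|^2\le C\varphi/R^2$ and, by \eqref{X-comparison estimate} applied on $B_{2R}(o)$ (with Calabi's trick to deal with $cut(o)$ at the point below),
\[
\Delta_X\varphi\ \ge\ -C\Big(\tfrac1{R^2}+\tfrac{\sqrt K}{R}+\tfrac1R\sup_{B_{2R}(o)}|X|\Big).
\]
Consider $G:=\varphi w$ and let $x_0\in B_{2R}(o)$ be a point of maximum of $G$; we may assume $G(x_0)>0$, since otherwise $w\equiv0$ on $B_R(o)$ and there is nothing to prove. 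At $x_0$ one has $\nabla G=0$ (hence $\nabla w=-(w/\varphi)\nabla\varphi$ there) and $\Delta_X G\le0$, i.e. $\varphi\,\Delta_X w+2g(\nabla\varphi,\nabla w)+w\,\Delta_X\varphi\le0$. Multiplying this by $\varphi$, inserting the differential inequality above together with the bounds on $|\nabla\varphi|$ and $\Delta_X\varphi$, and replacing $\nabla w$ by $-(w/\varphi)\nabla\varphi$, every term other than $\tfrac{2c_1}{n}G^2$ is of order at most $w^{3/2}$ in $w$ (the worst being the $G^{3/2}$-term coming from $g(\nabla f,\nabla\varphi)$). Young's inequality absorbs each of these either into $\tfrac{c_1}{n}G^2$ or into a term of the form $C(n)(\cdots)G$ plus a constant $C(n)R^{-4}$; here one is free to choose the Young parameters on the $\sqrt K/R$ term coming from $\Delta_X\varphi$ and on the curvature term coming from the differential inequality so that the various $K$-contributions combine into $\tfrac32(n-1)K$ inside the eventual positive part, while the $|X|/R$ term splits into $\sup_{B_{2R}(o)}|X|^2$ plus a multiple of $R^{-2}$.

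The outcome of the previous step is an inequality of the form
\[
\tfrac{c_1}{2n}\,G(x_0)^2\ \le\ C(n)\Big(\max\big\{\alpha+\tfrac32(n-1)K,\,0\big\}+\beta+\sup_{B_{2R}(o)}|X|^2+\tfrac1{R^2}\Big)G(x_0)+\frac{C(n)}{R^4},
\]
where the positive part appears because a nonpositive curvature-type coefficient may simply be discarded. Solving this quadratic inequality for $G(x_0)$, and using that $\varphi\equiv1$ on $B_R(o)$ so that $w(x)\le G(x_0)$ for every $x\in B_R(o)$, yields exactly \eqref{gradient estimate general X}. I expect the main obstacle to lie in the bookkeeping of the localization step: one has to check that every lower-order term — notably the drift-induced $w^{3/2}$-terms and the terms produced by the comparison bound for $\Delta_X\varphi$ — is genuinely absorbed while the surviving multiplicative constant depends only on $n$, and one has to group the several $K$-contributions so as to produce precisely the factor $\tfrac32$ in the final estimate.
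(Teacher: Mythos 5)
Your proposal is essentially the paper's proof: same logarithmic substitution, same use of the $X$-Bochner formula and of the comparison estimate \eqref{X-comparison estimate} for $\Delta_X\Phi$, same cutoff and maximum-point argument, and the same bookkeeping of the two $(n-1)K$-contributions into the factor $\tfrac32$. One point needs fixing, though: the differential inequality you state after ``expanding the square'' is slightly too strong. The expansion of $\bigl(|\nabla f|^2+\tfrac{F(u)}{u}-g(X,\nabla f)\bigr)^2$ produces the cross term $-2\tfrac{F(u)}{u}\,g(X,\nabla f)\ge -2\beta|X|\,w^{1/2}$, which is of order $w^{1/2}$ and cannot be absorbed into a combination of $w^{2}$ and $w$ alone (take $w\to0$ with $\beta\Lambda>0$). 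You must either keep this term (the paper carries it as a $-\tfrac{4\Lambda\beta}{n}G^{1/2}$ term all the way to a cubic inequality $t^3+pt+q\le0$ in $t=G^{1/2}$, which it then solves via the discriminant and the trigonometric root formula), or absorb it with an extra Young step at the price of an additive constant such as $C(\beta\Lambda)^{4/3}$, which after taking the square root in your final quadratic inequality still yields a contribution $\lesssim \beta+\Lambda^2$ by AM--GM. With that correction your endgame — a quadratic inequality $G^2\le AG+B$ in $G(x_0)$ rather than the paper's cubic in $G^{1/2}$ — is a legitimate and arguably cleaner alternative to the paper's root analysis, and the rest of the argument goes through as you describe.
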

\begin{proof}
Let $u\in C^{\infty}(B_{2R}(o))$ be a positive solution to (\ref{semilinear X-lap}). We set 
    \begin{equation*}
        \Lambda := \sup_{B_{2R}(o)}\vert X\vert,
    \end{equation*}
and we introduce the quantities    
    \begin{equation*}
        w:=\log u, \quad  Q:=\vert\nabla w\vert^{2}.
    \end{equation*}
By the equation for $u$, we have
    \begin{align*}
        \Delta_{X}w=\Delta w-g(X,\nabla w)=\frac{\Delta_{X}u}{u}-\frac{\vert\nabla u\vert^{2}}{u^{2}} =-\frac{F(u)}{u}-\vert\nabla w\vert^{2},
    \end{align*}
so the function $w$ satisfies the equation
    \begin{equation*}
        \Delta_{X}w+\vert\nabla w\vert^{2}+\frac{F(u)}{u}=0.
    \end{equation*}
Moreover, by the $X$-Bochner formula, (\ref{struttura 1 F}) and the bound on $\mathrm{Ric}_{X}$, we deduce that
    \begin{equation*}
    \begin{split}
         &\frac{1}{2}\Delta_{X}Q+g(\nabla w,\nabla Q) = \frac{1}{2}\Delta_{X}\vert \nabla w\vert^{2}+g(\nabla w,\nabla Q) \\
         &=\vert \mathrm{Hess}(w)\vert^{2} + g(\nabla w,\nabla\Delta_{X}w)+\mathrm{Ric}_{X}(\nabla w,\nabla w)+g(\nabla w,\nabla Q) \\
         &= \vert \mathrm{Hess}(w)\vert^{2} + g(\nabla w,\nabla(-\frac{F(u)}{u}-\vert\nabla w\vert^{2}))+\mathrm{Ric}_{X}(\nabla w,\nabla w)+g(\nabla w,\nabla \vert\nabla w\vert^{2})\\
         &\ge\frac{(\Delta w)^{2}}{n} -\frac{uF'(u)-F(u)}{u^{2}}g(\nabla w,\nabla u) -(n-1)K\vert\nabla w\vert^{2} \\
         &=\frac{1}{n}(\Delta_{X} w+g(X,\nabla w))^{2} -\bigg(F'(u)-\frac{F(u)}{u}\bigg)\vert\nabla w\vert^{2}-(n-1)K\vert\nabla w\vert^{2}\\
         &\ge\frac{1}{n}(\vert\nabla w\vert^{2}+\frac{F(u)}{u}-g(X,\nabla w))^{2} - \big(\alpha+(n-1)K\big)\vert\nabla w\vert^{2},
    \end{split}
    \end{equation*}
so that
    \begin{equation}\label{conseguenza X boch}
         \Delta_{X}Q+2g(\nabla w,\nabla Q) \ge\frac{2}{n}(\vert\nabla w\vert^{2}+\frac{F(u)}{u}-g(X,\nabla w))^{2} - 2\big(\alpha+(n-1)K\big)\vert\nabla w\vert^{2}.
    \end{equation}
Now, consider a real-valued function $\psi\in C^{\infty}([0,2R])$ such that:
\begin{itemize}
    \item $0\le\psi(t)\le1$ for all $t\in[0,2R]$,
    \item $\psi_{\mid_{[0,R]}}\equiv 1$,
    \item supp($\psi$)$\subseteq [0,2R)$,
    \item $-\frac{C_{1}}{R}\psi^{\frac{1}{2}}\le\psi'\le0$ for some $C_{1}>0$,
    \item $\vert \psi''\vert\le\frac{C_{2}}{R^{2}}$ for some $C_{2}>0$.
\end{itemize}
Define the function $\Phi(x):=\psi(r(x))$, where $r(x)$ is the distance from the point $o$; then $\Phi$ is compactly supported in $B_{2R}(o)$, and, up to modifying $\Phi$ with Calabi's trick (see \cite{calabi1958extension}), we can assume that $\Phi$ is of class $C^{2}$. Finally, we set $G:=\Phi Q$. \\
Since $G$ is a non-negative function which vanishes on $\partial B_{2R}(o)$, then it has a maximum. Let $x_{0}$ be a point of maximum for $G$; note that $x_{0}$ is in the interior of the ball $B_{2R}(o)$, and that we can assume that $G(x_{0})$ is positive (otherwise, everything is trivial). Then, at $x_{0}$, we have
\begin{enumerate}
    \item $\nabla G(x_{0})=0$,
    \item $\Delta_{X}G(x_{0})=\Delta G(x_{0}) - g(X,\nabla G(x_{0})) \le 0$.
\end{enumerate}
Note that the first property implies that, at $x_{0}$,
\begin{equation}
    \nabla Q = -Q\frac{\nabla\Phi}{\Phi}.
\end{equation}
Moreover, the second relation gives, at $x_{0}$,
    \begin{equation*}
    \begin{split}
        0&\ge\Delta_{X}G=\Delta_{X}(\Phi Q) \\
        &= \Phi\Delta_{X} Q + Q\Delta_{X}\Phi+2g(\nabla\Phi,\nabla Q)\\
        &\ge\Phi\Delta_{X} Q + Q\Delta_{X}\Phi-2\vert\nabla\Phi\vert\vert\nabla Q\vert\\
        &= \Phi\Delta_{X} Q + Q\Delta_{X}\Phi-2Q\frac{\vert\nabla\Phi\vert^{2}}{\Phi},
    \end{split}
    \end{equation*}
    hence
    \begin{equation}\label{stima 1 cutoff}
        \Phi\Delta_{X} Q \le \bigg(-\Delta_{X}\Phi+2\frac{\vert\nabla\Phi\vert^{2}}{\Phi}\bigg)Q.
    \end{equation}
    Now, we need to estimate the terms in $\Phi$ on the right-hand side of the inequality. We estimate the first term:
    \begin{equation*}
        \vert\nabla\Phi\vert^{2} = \vert \psi'(r)\nabla r\vert^{2}=(\psi'(r))^{2} \le\frac{C_{1}^{2}}{R^{2}}\psi(r) = \frac{C_{1}^{2}}{R^{2}}\Phi.
    \end{equation*}
    We estimate the second term: applying Theorem \ref{X-comparison for vector field}, we get
    \begin{equation*}
        \begin{split}
            \Delta_{X}\Phi&=\Delta\Phi-g(X,\nabla\Phi)\\
            &=\psi'(r)\Delta r+\psi''(r)-\psi'(r)g(X,\nabla r)\\
            &=\psi'(r)\Delta_{X} r+\psi''(r) \\
            &\ge-\frac{C_{1}}{R}\psi^{\frac{1}{2}}(r)\bigg(\frac{n-1}{r}+(n-1)\sqrt{K}+\Lambda\bigg)-\frac{C_{2}}{R^{2}}\\
            &\ge-\frac{C_{1}}{R}\bigg(\frac{n-1}{R}+(n-1)\sqrt{K}+\Lambda\bigg)-\frac{C_{2}}{R^{2}}\\
            & = -(n-1)K-\Lambda^{2} - \frac{C_{3}}{R^{2}},
        \end{split}
    \end{equation*}
    where $C_{3}:=(n-1)C_{1}+C_{2}$, hence
    \begin{equation*}
        -\Delta_{X}\Phi\le (n-1)K+\Lambda^{2}+\frac{C_{3}}{R^{2}}.
    \end{equation*}
    Inserting these two estimates in (\ref{stima 1 cutoff}), we obtain
    \begin{equation*}
    \begin{split}
        \Phi\Delta_{X} Q &\le \bigg((n-1)K+\Lambda^{2}+\frac{C_{3}+2C_{1}^{2}}{R^{2}}\bigg)Q \\
        &=:\bigg((n-1)K+\Lambda^{2}+\frac{C_{4}}{R^{2}}\bigg)Q,
    \end{split}
    \end{equation*}
    that gives the first key estimate
    \begin{equation}\label{key estimate 1}
    \begin{split}
    \Phi\Delta_{X}Q-2g(\nabla w,\nabla\Phi)Q &\le \bigg((n-1)K+\Lambda^{2}+\frac{C_{4}}{R^{2}}\bigg)Q+2\vert\nabla w\vert\vert\nabla\Phi\vert Q \\
    &\le \bigg((n-1)K+\Lambda^{2}+\frac{C_{4}}{R^{2}}\bigg)Q+\frac{2C_{1}}{R} Q^{\frac{3}{2}}\Phi^{\frac{1}{2}}.
    \end{split}
    \end{equation}
For what concern the second key estimate, by exploiting (\ref{struttura 2 F}) and the bound $\vert X\vert\le\Lambda$ in (\ref{conseguenza X boch}) we get
    \begin{equation}\label{key estimate 2}
    \begin{split}
        &\Phi\Delta_{X}Q-2g(\nabla w,\nabla\Phi)Q=(\Delta_{X}Q+2g(\nabla w,\nabla Q))\Phi\\
        &\ge \bigg[\frac{2}{n}(\vert\nabla w\vert^{2}+\frac{F(u)}{u}-g(X,\nabla w))^{2} - 2\big(\alpha+(n-1)K\big)\vert\nabla w\vert^{2}\bigg]\Phi \\
        &= \bigg[\frac{2}{n}\bigg(\vert\nabla w\vert^{4}+\bigg(\frac{F(u)}{u}\bigg)^{2}+g(X,\nabla w)^{2}+2\frac{F(u)}{u}\vert \nabla w\vert^{2}-2\vert\nabla w\vert^{2}g(X,\nabla w) \\
        &-2\frac{F(u)}{u}g(X,\nabla w)\bigg) - 2\big(\alpha+(n-1)K\big)\vert\nabla w\vert^{2}\bigg]\Phi \\
        &\ge \bigg[\bigg(\frac{2}{n}\vert\nabla w\vert^{4}-\frac{4}{n}\beta\vert \nabla w\vert^{2}-\frac{4\Lambda}{n}\vert\nabla w\vert^{3}-\frac{4\Lambda}{n}\beta\vert\nabla w\vert\bigg) \\
        & - 2\big(\alpha+(n-1)K\big)\vert\nabla w\vert^{2}\bigg]\Phi \\
        &\ge \bigg[\frac{2}{n}Q^{2}-\frac{4\Lambda}{n}Q^{\frac{3}{2}}- 2\bigg(\alpha +\frac{2}{n}\beta +(n-1)K\bigg)Q-\frac{4\Lambda}{n}\beta Q^{\frac{1}{2}}\bigg]\Phi.
    \end{split}
    \end{equation}
    Combining (\ref{key estimate 1}) and (\ref{key estimate 2}), both multiplied by $\Phi$, and using the fact that $\Phi\le1$, we get
    \begin{equation*}
    \begin{split}
        \frac{2}{n}G^{2}-\frac{4\Lambda}{n}G^{\frac{3}{2}}- 2\bigg(\alpha +\frac{2}{n}\beta +(n-1)K\bigg)G-\frac{4\Lambda}{n}\beta G^{\frac{1}{2}} \le \bigg((n-1)K+\Lambda^{2}+\frac{C_{4}}{R^{2}}\bigg)G+\frac{2C_{1}}{R} G^{\frac{3}{2}},
        \end{split}
    \end{equation*}
    that can be written as
    \begin{equation}\label{equazione intermedia}
    \begin{split}
        \frac{2}{n}G^{2}-2\bigg(\frac{2\Lambda}{n}+\frac{C_{1}}{R}\bigg)G^{\frac{3}{2}}- 2\bigg(\alpha +\frac{2}{n}\beta +\frac{3}{2}(n-1)K+\Lambda^{2}+\frac{C_{4}}{R^{2}}\bigg)G-\frac{4\Lambda}{n}\beta G^{\frac{1}{2}} \le 0.
        \end{split}
    \end{equation}
    Now, we use Young inequality on the term in $G^{\frac{3}{2}}$ to find
    \begin{equation*}
    \begin{split}
        -\bigg(\frac{4\Lambda}{n}+\frac{2C_{1}}{R}\bigg)G^{\frac{3}{2}}&\ge-\frac{1}{n}G^{2}-n\bigg(\frac{2\Lambda}{n}+\frac{C_{1}}{R}\bigg)^{2}G \\
        & \ge-\frac{1}{n}G^{2}-\bigg(\frac{8\Lambda^{2}}{n}+\frac{2n^{2}C^{2}_{1}}{R^{2}}\bigg)G,
    \end{split}
    \end{equation*}
    so that (\ref{equazione intermedia}) becomes
    \begin{equation*}
        \begin{split}
        \frac{1}{n}G^{2}- 2\bigg(\tilde{\alpha} +\frac{2}{n}\beta +\Lambda^{2}+\frac{4\Lambda^{2}}{n}+\frac{C_{5}}{R^{2}}\bigg)G-\frac{4\Lambda}{n}\beta G^{\frac{1}{2}} \le 0,
        \end{split}
    \end{equation*}
    where  
    \begin{equation*}
        C_{5}:=C_{4}+2n^{2}C_{1}^{2}, \quad \tilde{\alpha}:=\max\bigg\{\alpha+\frac{3}{2}(n-1)K,0\bigg\}.
    \end{equation*}
    Since $G(x_{0})>0$, we can divide by $\frac{1}{n}G^{\frac{1}{2}}(x_{0})$, and we obtain
    \begin{equation}\label{pre terzo grado}
        G^{\frac{3}{2}}- \bigg(2n\tilde{\alpha} +4\beta +8\Lambda^{2}+2n\Lambda^{2}+\frac{2nC_{5}}{R^{2}}\bigg)G^{\frac{1}{2}}-4\Lambda\beta \le 0.
    \end{equation}
    Set
    \begin{equation*}
        \begin{split}
            t&:=G^{\frac{1}{2}}(x_{0}), \\
            A&:=2n\tilde{\alpha} +2n\Lambda^{2}+\frac{2nC_{5}}{R^{2}} \ge0,
        \end{split}
    \end{equation*}
    then (\ref{pre terzo grado}) becomes
    \begin{equation*}
        \begin{split}
        t^{3} - (4\beta +8\Lambda^{2} + A)t-4\Lambda\beta \le 0.
        \end{split}
    \end{equation*}
    We need to find the roots of the equation $t^{3}+pt+q=0$, where 
    \begin{equation*}
            p:=-(4\beta +8\Lambda^{2} + A), \quad q:=-4\Lambda\beta.
    \end{equation*}
    If $\beta=0$, then the polynomial is $t^{3}+pt$, and the roots are
    \begin{equation*}
        r_{1}=-\sqrt{-p}, \quad r_{2}=0, \quad r_{3} = \sqrt{-p}. 
    \end{equation*}
    Since $t=G^{\frac{1}{2}}(x_{0})>0$, this implies that $t^{3}+pt\le0$ if and only if
    \begin{equation*}
        t\le \sqrt{-p}.
    \end{equation*}
    If $\beta>0$, note that the discriminant of the polynomial, which is  
    \begin{equation*}
        -(4p^{3}+27q^{2}),
    \end{equation*}
    is non-negative: indeed, 
    \begin{equation*}
    \begin{split}
        -(4p^{3}+27q^{2}) &= 4(4\beta +8\Lambda^{2} + A)^{3}-27(-4\Lambda\beta)^{2} \\
        &=4\sum_{i=0}^{3}\binom{3}{i}(4\beta)^{i}(8\Lambda^{2}+A)^{3-i}-432\beta^{2}\Lambda^{2} \\
        &=4\sum_{i=0}^{3}\sum_{j=0}^{3-i}\binom{3}{i}\binom{3-i}{j}(4\beta)^{i}(8\Lambda^{2})^{j}A^{3-i-j}-432\beta^{2}\Lambda^{2} \\
        &\ge12(4\beta)^{2}(8\Lambda^{2})-432\beta^{2}\Lambda^{2} \\
        &= 1104\beta^{2}\Lambda^{2} >0.
    \end{split}
    \end{equation*}
    This means that there are exactly three real roots. In particular, there is exactly one real root on $[0,+\infty)$, and this can be seen by looking at the sign of the first derivative of $t^{3}+pt+q$. Let $r_{1},r_{2},r_{3}$ be the roots of $t^{3}+pt+q$, with $r_{1},r_{2}<0$ and $r_{3}>0$; since $G^{\frac{1}{2}}(x_{0})>0$, we have $t^{3}+pt+q=(t-r_{1})(t-r_{2})(t-r_{3})\le0$ if and only if $t\le r_{3}$. Moreover, the expression of the roots in this case is
    \begin{equation*}
        r_{\ell}=-2\sqrt{-\frac{p}{3}}\sin\bigg(\frac{1}{3}\arcsin\bigg(\frac{3\sqrt{3}q}{2p\sqrt{-p}}\bigg)+\frac{2\ell\pi}{3}\bigg), \quad \ell=-1,0,1,
    \end{equation*}
    hence
    \begin{equation*}
        t\le\frac{2}{\sqrt{3}}\sqrt{-p}.
    \end{equation*}
    Summing up, for $\beta \ge 0$ we have that
    \begin{equation*}
        t\le\frac{2}{\sqrt{3}}\sqrt{-p},
    \end{equation*}
    hence
    \begin{equation*}
    \begin{split}
        G(x_{0}) &= t^{2} \le -\frac{4}{3}p \\
        &=\frac{4}{3}\bigg(2n\tilde{\alpha} +4\beta +(8+2n)\Lambda^{2}+\frac{2nC_{5}}{R^{2}}\bigg) \\
        & \le C(n)\bigg(\tilde{\alpha} + \beta +  \Lambda^{2} + \frac{1}{R^{2}}\bigg).
    \end{split}
    \end{equation*}
    Finally, for every $x\in B_{R}(o)$ we have
    \begin{equation*}
        \frac{\vert\nabla u\vert^{2}}{u^{2}}(x) \le G(x_{0})  \le C(n)\bigg(\tilde{\alpha} + \beta +  \Lambda^{2} + \frac{1}{R^{2}}\bigg).
    \end{equation*}
\end{proof}
\begin{remark}
    The gradient estimate in Theorem \ref{general gradient estimate for X-Lap} can be applied to the case of $X$-harmonic functions, which corresponds to $\alpha=\beta=0$. Another example of $F$ that satisfies the structural conditions of the theorem is $F(t)=\frac{at}{(t+b)^{\sigma}}$, where $a\in\mathbb{R}$, $b>0$ and $\sigma\ge-1$.
\end{remark}
\begin{remark}
    In the case of $X$-harmonic functions, the result in Theorem \ref{general gradient estimate for X-Lap} extends to the non-gradient case the result in \cite{Li-ChenGradientActa}, and removes the assumption on the Ricci curvature, requiring only a lower bound on the Bakry-Emery-Ricci curvature.
\end{remark}
\begin{remark}
    During the preparation of this work, an analogue estimate for $X$-harmonic functions has appeared in a paper on arXiv (see \cite{dong2025gradientestimatespvharmonicfunctions}), where the authors established, with different techniques, the validity of the same gradient estimate, considering also the case of the drifted $p$-Laplacian.
\end{remark}
\noindent As a natural consequence, we have the following local Harnack inequality for positive solutions to (\ref{semilinear X-lap}), obtained by integration along geodesics.
\begin{corollary}\label{X-Harnack}
Let $(M,g)$ be a complete Riemannian manifold of dimension $n$, and let $X$ be a smooth vector field on $M$. Let $o\in M$ be a fixed point, let $R>0$, and assume that $\mathrm{Ric}_{X}\ge-(n-1)K$ on the ball $B_{2R}(o)$, for some constant $K\ge0$. Let $u\in C^{\infty}(B_{2R}(o))$ be a positive solution to
    \begin{equation}
        \Delta_{X}u+F(u)=0 \quad \text{on $B_{2R}(o)$},
    \end{equation}
    where $F\in C^{\infty}([0,+\infty))$ satisfies the structural conditions (\ref{struttura 1 F}) and (\ref{struttura 2 F})    
    for some $\alpha\in\mathbb{R}$ and $\beta \ge 0$. Then, for every $x,y\in B_{R}(o)$ we have the estimate
    \begin{equation}
        u(y)\le C_{2}\mathrm{exp}\big(C_{1}(\gamma +\sup_{B_{2R}(o)}\vert X\vert)R\big)u(x),
    \end{equation}
    where $C_{1},C_{2}>0$ are constants depending only on $n$ and 
    \begin{equation*}
        \gamma:=\sqrt{\max\bigg\{\alpha+ \frac{3}{2}(n-1)K,0\bigg\}+ \beta}.
    \end{equation*}
\end{corollary}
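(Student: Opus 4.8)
The plan is to integrate the pointwise gradient bound of Theorem \ref{general gradient estimate for X-Lap} along a path joining $x$ and $y$ that stays inside the region where the estimate is available. First I would pass to $w:=\log u$: since $u$ is smooth and positive, $w\in C^{\infty}(B_{2R}(o))$, and for every $z\in B_{R}(o)$ Theorem \ref{general gradient estimate for X-Lap} gives
\[
|\nabla w|(z)=\frac{|\nabla u|}{u}(z)\le \sqrt{C(n)}\left(\tilde\alpha+\beta+\Lambda^{2}+\frac{1}{R^{2}}\right)^{1/2},
\]
where $\tilde\alpha:=\max\{\alpha+\tfrac{3}{2}(n-1)K,0\}$ and $\Lambda:=\sup_{B_{2R}(o)}|X|$. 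Using the elementary subadditivity of the square root on nonnegative numbers, grouping the terms $\tilde\alpha$ and $\beta$ together, this yields $|\nabla w|\le \sqrt{C(n)}\,(\gamma+\Lambda+R^{-1})$ on $B_{R}(o)$, with $\gamma=\sqrt{\tilde\alpha+\beta}$ exactly as in the statement.

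Next I would fix the integration path. Given $x,y\in B_{R}(o)$, let $\sigma$ be the concatenation of a minimizing geodesic from $x$ to $o$ with a minimizing geodesic from $o$ to $y$; these exist by completeness. Every point $z$ on the first segment satisfies $d(z,o)\le d(x,o)<R$, and likewise on the second segment, so $\sigma$ lies entirely in $B_{R}(o)$, where the gradient bound holds, and its length is $d(x,o)+d(o,y)<2R$. Since $w$ is $C^{1}$, integrating $\frac{d}{dt}w(\sigma(t))$ along $\sigma$ gives
\[
|w(y)-w(x)|\le \int_{\sigma}|\nabla w|\le \sqrt{C(n)}\,(\gamma+\Lambda+R^{-1})\,\big(d(x,o)+d(o,y)\big)<2\sqrt{C(n)}\,(\gamma+\Lambda)R+2\sqrt{C(n)}.
\]
Exponentiating, $u(y)\le e^{2\sqrt{C(n)}}\exp\!\big(2\sqrt{C(n)}\,(\gamma+\Lambda)R\big)u(x)$, which is the claimed inequality with $C_{2}:=e^{2\sqrt{C(n)}}$ and $C_{1}:=2\sqrt{C(n)}$, both depending only on $n$.

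I do not expect a genuine obstacle here: the only point requiring care is that Theorem \ref{general gradient estimate for X-Lap} only controls $|\nabla w|$ on $B_{R}(o)$, so one cannot simply integrate along a minimizing geodesic from $x$ to $y$, which need not remain in $B_{R}(o)$; routing the path through the center $o$ both keeps it inside $B_{R}(o)$ and keeps its total length below $2R$. If one preferred to avoid the corner at $o$, an equivalent route is to apply the gradient estimate on a slightly larger concentric ball (shifting the roles of $R$ and $2R$) so that $B_{R}(o)$ becomes geodesically star-shaped about its points, but the through-$o$ argument is cleaner and already produces constants depending only on $n$.
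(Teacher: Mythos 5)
Your argument is correct and is exactly the route the paper intends (the corollary is stated as ``obtained by integration along geodesics''); you simply make explicit the two points the paper leaves implicit, namely the subadditivity of the square root to extract $\gamma+\Lambda+R^{-1}$ from the gradient bound, and the routing of the path through $o$ so that it stays in $B_{R}(o)$ where Theorem \ref{general gradient estimate for X-Lap} applies. The resulting constants $C_{1}=2\sqrt{C(n)}$, $C_{2}=e^{2\sqrt{C(n)}}$ depend only on $n$, as required.
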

\begin{remark}
    The gradient estimates in Theorem \ref{gradient estimate for X-Lap} and the local Harnack inequality have been stated assuming some structural conditions on the semilinearity $F$. However, due to their local nature, the results can also be stated in the case of a general $F$, up to substituting the constant $\alpha$ and $\beta$ with, respectively,
    \begin{equation*}
        \sup_{B_{2R}(o)}\bigg\{F'(u)-\frac{F(u)}{u}\bigg\}
    \end{equation*}
    and 
    \begin{equation*}
        \sup_{B_{2R}(o)}\frac{\vert F(u)\vert}{u}.
    \end{equation*}
\end{remark}
\noindent The nature of the previous results is local, and in the general case they cannot give global information for the presence of the term $\sup_{B_{2R}(o)}\vert X\vert$. When the bounds on $\mathrm{Ric}_{X}$ and $\vert X\vert$ are global, the gradient estimate can be stated as follows.
\begin{theorem}\label{gradient estimate for X-Lap}
    Let $(M,g)$ be a complete Riemannian manifold of dimension $n$, and let $X$ be a smooth vector field on $M$. Let $o\in M$ be a fixed point, let $R>0$, and assume that 
    \begin{itemize}
        \item $\mathrm{Ric}_{X}\ge-(n-1)K$,
        \item $\vert X\vert\le\Lambda$,
    \end{itemize}
    on $M$, for some constants $K,\Lambda\ge0$. Let $u\in C^{\infty}(B_{2R}(o))$ be a positive solution to
    \begin{equation}
        \Delta_{X}u+F(u)=0 \quad \text{on $B_{2R}(o)$},
    \end{equation}
    where $F\in C^{\infty}([0,+\infty))$ satisfies the structural conditions (\ref{struttura 1 F}) and (\ref{struttura 2 F})    
    for some $\alpha\in\mathbb{R}$ and $\beta \ge 0$. Then, for every $x\in B_{R}(o)$ we have the estimate
    \begin{equation}\label{gradient estimate}
        \frac{\vert \nabla u\vert^{2}}{u^{2}}(x)\le C(n)\bigg(\max\bigg\{\alpha+ \frac{3}{2}(n-1)K,0\bigg\}+ \beta+\Lambda^{2}+\frac{1}{R^{2}}\bigg),
    \end{equation}
    where $C(n)>0$ is a constant depending only on $n$.
\end{theorem}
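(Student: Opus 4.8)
The plan is to deduce this statement as an immediate specialization of the local gradient estimate already established in Theorem \ref{general gradient estimate for X-Lap}. The key observation is that the two global hypotheses here, namely $\mathrm{Ric}_{X}\ge-(n-1)K$ and $\vert X\vert\le\Lambda$ on all of $M$, trivially restrict to the corresponding local hypotheses on the ball $B_{2R}(o)$ that Theorem \ref{general gradient estimate for X-Lap} requires: the curvature bound holds on $B_{2R}(o)$, and $\sup_{B_{2R}(o)}\vert X\vert^{2}\le\Lambda^{2}$.

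First I would fix $o\in M$, $R>0$, and a positive solution $u\in C^{\infty}(B_{2R}(o))$ of $\Delta_{X}u+F(u)=0$ with $F$ satisfying (\ref{struttura 1 F}) and (\ref{struttura 2 F}). Since $\mathrm{Ric}_{X}\ge-(n-1)K$ holds on $M$, it holds in particular on $B_{2R}(o)$, so all the hypotheses of Theorem \ref{general gradient estimate for X-Lap} are met. Applying that theorem yields, for every $x\in B_{R}(o)$,
\begin{equation*}
    \frac{\vert\nabla u\vert^{2}}{u^{2}}(x)\le C(n)\bigg(\max\bigg\{\alpha+\frac{3}{2}(n-1)K,0\bigg\}+\beta+\sup_{B_{2R}(o)}\vert X\vert^{2}+\frac{1}{R^{2}}\bigg).
\end{equation*}
Then I would simply bound $\sup_{B_{2R}(o)}\vert X\vert^{2}\le\Lambda^{2}$ using the global hypothesis on $\vert X\vert$, which turns the right-hand side into $C(n)\big(\max\{\alpha+\frac{3}{2}(n-1)K,0\}+\beta+\Lambda^{2}+\frac{1}{R^{2}}\big)$, exactly (\ref{gradient estimate}).

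There is no real obstacle here: the content of the statement is entirely contained in Theorem \ref{general gradient estimate for X-Lap}, and the present formulation only records the fact that, once the bound on $\vert X\vert$ is global, the constant $\Lambda$ in the estimate no longer depends on $R$. This is precisely what makes the estimate useful globally — for instance, one may let $R\to+\infty$ to obtain a global gradient bound, which is the point exploited in the Liouville-type applications of the next section.
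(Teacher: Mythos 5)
Your proposal is correct and coincides with the paper's own (implicit) treatment: the paper gives no separate proof of Theorem \ref{gradient estimate for X-Lap}, presenting it as an immediate specialization of Theorem \ref{general gradient estimate for X-Lap} obtained by restricting the global hypotheses to $B_{2R}(o)$ and bounding $\sup_{B_{2R}(o)}\vert X\vert^{2}\le\Lambda^{2}$, exactly as you do.
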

\begin{remark}
    It is important to underline that the constants in (\ref{gradient estimate}) depends only on $\alpha$, $\beta$ and the global bounds on $\mathrm{Ric}_{X}$ and $\vert X\vert$, and not on the specific point $o$. This will be useful in establishing the Liouville property in the next section. 
\end{remark}

\noindent As a natural consequences of the gradient estimate (\ref{gradient estimate}), we have the following $global$ gradient estimate, obtained by sending $R$ to infinity.
\begin{corollary}\label{global gradient estimate for X-Lap prop}
    Let $(M,g)$ be a complete Riemannian manifold of dimension $n$, and let $X$ be a smooth vector field on $M$. Assume that 
    \begin{itemize}
        \item $\mathrm{Ric}_{X}\ge-(n-1)K$,
        \item $\vert X\vert\le\Lambda$,
    \end{itemize}
    on $M$, for some constants $K,\Lambda\ge0$. Let $u\in C^{\infty}(M)$ be a positive solution to
    \begin{equation}
        \Delta_{X}u+F(u)=0 \quad \text{on $M$},
    \end{equation}
    where $F\in C^{\infty}([0,+\infty))$ satisfies the structural conditions (\ref{struttura 1 F}) and (\ref{struttura 2 F})    
    for some $\alpha\in\mathbb{R}$ and $\beta \ge 0$. Then, for every $x\in M$ we have the estimate
    \begin{equation}\label{global gradient estimate for X-Lap}
        \frac{\vert \nabla u\vert^{2}}{u^{2}}(x)\le C(n)\bigg(\max\bigg\{\alpha+ \frac{3}{2}(n-1)K,0\bigg\}+ \beta+\Lambda^{2}\bigg),
    \end{equation}
    where $C(n)>0$ is a constant depending only on $n$.
\end{corollary}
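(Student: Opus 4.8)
The plan is to deduce the global estimate from the local one in Theorem~\ref{gradient estimate for X-Lap} by a routine exhaustion argument, letting the radius tend to infinity. First I would fix an arbitrary point $x\in M$. Since $u$ is smooth and positive on all of $M$, for every $R>0$ it restricts to a positive smooth solution of $\Delta_{X}u+F(u)=0$ on the ball $B_{2R}(x)$, and the global hypotheses $\mathrm{Ric}_{X}\ge-(n-1)K$ and $\vert X\vert\le\Lambda$ hold in particular on $B_{2R}(x)$. Hence Theorem~\ref{gradient estimate for X-Lap}, applied with center point $o=x$ (so that $x\in B_{R}(x)$ trivially), gives
\begin{equation*}
    \frac{\vert\nabla u\vert^{2}}{u^{2}}(x) \le C(n)\bigg(\max\bigg\{\alpha+\frac{3}{2}(n-1)K,0\bigg\}+\beta+\Lambda^{2}+\frac{1}{R^{2}}\bigg),
\end{equation*}
where $C(n)$ depends only on the dimension and, by the remark following Theorem~\ref{gradient estimate for X-Lap}, neither on $R$ nor on the chosen center.

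Next I would let $R\to+\infty$. Every term on the right-hand side except $1/R^{2}$ is independent of $R$, and $1/R^{2}\to0$, so passing to the limit yields
\begin{equation*}
    \frac{\vert\nabla u\vert^{2}}{u^{2}}(x) \le C(n)\bigg(\max\bigg\{\alpha+\frac{3}{2}(n-1)K,0\bigg\}+\beta+\Lambda^{2}\bigg).
\end{equation*}
Since $x\in M$ was arbitrary, this is exactly \eqref{global gradient estimate for X-Lap}.

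There is essentially no serious obstacle here: the only point that genuinely needs the global hypotheses (rather than merely local ones on $B_{2R}(x)$) is that the constant $C(n)$ and all the remaining terms on the right-hand side of the local estimate be uniform in the center $o$ and in $R$, which is guaranteed precisely because the bounds on $\mathrm{Ric}_{X}$ and $\vert X\vert$ are assumed on all of $M$. This uniformity is what licenses taking the limit $R\to+\infty$ pointwise and is the reason the corollary is stated under global, rather than local, curvature and drift bounds.
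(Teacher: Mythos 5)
Your argument is correct and is exactly the one the paper intends: the corollary is stated as being ``obtained by sending $R$ to infinity'' in the local estimate of Theorem~\ref{gradient estimate for X-Lap}, whose constants are uniform in the center $o$ precisely because the bounds on $\mathrm{Ric}_{X}$ and $\vert X\vert$ are global. Nothing is missing.
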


\begin{remark}
We observe that when the vector field is unbounded, one cannot expect the validity of a global gradient estimates, as shown by the following example in the $X$-harmonic case. We begin by considering the 1-dimensional case: on $\mathbb{R}$ with the standard flat metric, fix a smooth function $b\in C^{\infty}(\mathbb{R})$, and consider the equation associated with the drifted Laplacian with coefficient $b$, which is
\begin{equation}\label{ODE}
    u''(x)-b(x)u'(x)=0.
\end{equation}
By setting $v(x):=u'(x)$, one can solve this ODE, and find that the solutions are of the form
\begin{equation*}
u(x)=c_{1}+c_{2}\int_{0}^{x}\textnormal{exp}\bigg(\int_{0}^{t}b(s)\ \textnormal{d}s\bigg)\textnormal{d}t,
\end{equation*}
where $c_{1},c_{2}$ are real numbers. In particular, we can take $c_{1}=0$ and $c_{2}=1$, so that 
\begin{equation}\label{solution of ODE}
u(x)=\int_{0}^{x}\textnormal{exp}\bigg(\int_{0}^{t}b(s)\ \textnormal{d}s\bigg)\textnormal{d}t
\end{equation}
is a solution to (\ref{ODE}). Moreover, note that, by De L'Hopital theorem,
\begin{equation*}
    \lim_{x\rightarrow+\infty}\frac{u'(x)}{u(x)}=\lim_{x\rightarrow+\infty}\frac{u''(x)}{u'(x)}=\lim_{x\rightarrow+\infty}b(x),
\end{equation*}
so the behavior of $b(x)$ at infinity is related to that of $(\log(u))'$: this tells that the validity of the global gradient estimate depends on the behavior of $b$ at infinity. Now, consider $\mathbb{R}^{n}$ with the standard metric, and let $X:=b(x_{1})\frac{\partial}{\partial x_{1}}$ be a smooth vector field on $\mathbb{R}^{n}$. The associated $X$-Ricci curvature is non-negative if and only if $b'(t)\ge0$. Moreover, the function $u$ defined in (\ref{solution of ODE}) can be extended to $\mathbb{R}^{n}$ to produce a positive $X$-harmonic function. To construct an explicit counterexample, we take $b$ of the form
\begin{equation*}
    b(x)=\int_{0}^{x}\frac{1}{(1+t^{2})^{\frac{\delta'}{2}}}\ \textnormal{d}t,
\end{equation*}
where $\delta\in(0,1)$ and $\delta':=1-\delta$. Then $b$ is odd, increasing, divergent at infinity, and satisfies  
\begin{equation*}
    \vert b(x)\vert \le C(1+\vert x\vert)^{\delta}.
\end{equation*}
Indeed, for $x\ge0$ we have
\begin{equation*}
\begin{split}
    b(x)&=\int_{0}^{x}\frac{1}{(1+t^{2})^{\frac{\delta'}{2}}}\ \textnormal{d}t = \frac{t}{(1+t^{2})^{\frac{\delta'}{2}}}\bigg\vert_{0}^{x} - \int_{0}^{x}t\bigg(\frac{\textnormal{d}}{\textnormal{d}t}\frac{1}{(1+t^{2})^{\frac{\delta'}{2}}}\bigg)\ \textnormal{d}t \\
    &= \frac{x}{(1+x^{2})^{\frac{\delta'}{2}}} - \int_{0}^{x}t\bigg(-\frac{\delta'}{2}\frac{2t}{(1+t^{2})^{1+\frac{\delta'}{2}}}\bigg)\ \textnormal{d}t \\
    &= \frac{x}{(1+x^{2})^{\frac{\delta'}{2}}} + \delta'\int_{0}^{x}\frac{t^{2}}{1+t^{2}}\frac{1}{(1+t^{2})^{\frac{\delta'}{2}}}\ \textnormal{d}t \\
    &\le \frac{x}{(1+x^{2})^{\frac{\delta'}{2}}} + \delta'\int_{0}^{x}\frac{1}{(1+t^{2})^{\frac{\delta'}{2}}}\ \textnormal{d}t \\
    &= \frac{x}{(1+x^{2})^{\frac{\delta'}{2}}} + \delta'b(x),
\end{split}
\end{equation*}
hence
\begin{equation*}
\begin{split}
    b(x)\le\frac{1}{\delta}\frac{x}{(1+x^{2})^{\frac{\delta'}{2}}}\le\frac{1}{\delta}\frac{(1+x^{2})^{\frac{1}{2}}}{(1+x^{2})^{\frac{\delta'}{2}}} =\frac{1}{\delta}(1+x^{2})^{\frac{\delta}{2}}\le\frac{1}{\delta}(1+\vert x\vert)^{\delta}. 
\end{split}
\end{equation*}
Moreover, if $x<0$, then
\begin{equation*}
    b(x)=-b(-x)\ge-\frac{1}{\delta}(1+\vert x\vert)^{\delta}.
\end{equation*}
Putting everything together, we obtain the estimate
\begin{equation*}
    \vert b(x)\vert\le\frac{1}{\delta}(1+\vert x\vert)^{\delta}
\end{equation*}
for every $x\in\mathbb{R}$. Moreover, we see that, by construction, $\vert\nabla\log u\vert$ cannot be bounded at infinity. Thus, we have produced a counterexample to the global gradient estimate in the case of vector fields of sublinear growth.
\end{remark}

\section{Liouville theorem for positive $X$-harmonic functions}
In this section, we discuss the validity of the Liouville property for positive, globally defined $X$-harmonic functions. 
\\
Let $(M,g)$ be a complete, non-compact Riemannian manifold of dimension $n$, and consider a smooth vector field $X$ on $M$. We say that $(M,g,X)$ satisfies the Liouville property for the $X$-Laplacian if every positive $X$-harmonic function on $M$ is constant. Our aim is to establish conditions on $X$ that guarantee the validity of the Liouville property when the $X$-Ricci curvature is non-negative. It is known by the works \cite{cheng1975differential} and \cite{yau1975harmonic} that if $X\equiv0$, then both the gradient estimate and the Liouville property holds. Moreover, we can reproduce the proof of Proposition 3.1 in \cite{munteanu2011smooth} to get the following version of the result by Munteanu-Wang in the non-gradient case.
\begin{theorem}\label{X-Munteanu-Wang}
    Let $(M,g)$ be a complete, non-compact Riemannian manifold of dimension $n$, let $X$ be a smooth vector field on $M$, and suppose that $\mathrm{Ric}_{X}\ge 0$. If $u\in C^{\infty}(M)$ is a positive $X$-harmonic function, then
    \begin{equation*}
        \sup_{M}\vert\nabla\log u\vert^{2}\le C(n)(\Omega(u)+\Omega(u)^{2}),
    \end{equation*}
    where 
    \begin{equation*}
        \Omega(u):=\limsup_{R\rightarrow +\infty}\bigg[\frac{1}{R}\sup_{B_{R}(o)}\log(u+1)\bigg]
    \end{equation*}
    and $C(n)>0$ is a constant depending only on $n$. In particular, every positive $X$-harmonic function of sub-exponential growth on $M$ is constant.
\end{theorem}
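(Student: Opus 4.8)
The plan is to adapt the maximum-principle argument of Munteanu--Wang (the proof of Proposition~3.1 in \cite{munteanu2011smooth}) to the non-gradient setting. The structural fact that makes this possible is that, for a positive $X$-harmonic $u$, the function $\log u$ satisfies the \emph{pointwise} identity $\Delta_{X}\log u=-|\nabla\log u|^{2}$; this lets one run Yau's scheme with a cut-off built out of $u$ itself, so that the $X$-Laplacian comparison of Theorem~\ref{X-comparison for vector field} --- which is exactly what would push a term $\sup|X|$ into the estimate --- is never invoked.

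First the reductions and the Bochner step. The last assertion follows from the displayed estimate, since a positive $X$-harmonic function of sub-exponential growth has $\Omega(u)=0$ and hence $|\nabla\log u|\equiv0$. To prove the estimate it suffices to control $|\nabla\log u|^{2}$ at an arbitrary fixed point of $M$. Writing $w:=\log u$, one has $\Delta_{X}w=-|\nabla w|^{2}=:-Q$. Inserting $w$ into the $X$-Bochner formula (Lemma~\ref{X-Bochner}), using $\Delta_{X}w=-Q$ to cancel $g(\nabla w,\nabla\Delta_{X}w)$ against $g(\nabla w,\nabla Q)$, discarding the term $\mathrm{Ric}_{X}(\nabla w,\nabla w)\ge0$, and estimating $|\mathrm{Hess}(w)|^{2}\ge n^{-1}(\Delta w)^{2}$ together with $\Delta w=\Delta_{X}w+g(X,\nabla w)=-Q+g(X,\nabla w)$, one obtains
\[
\Delta_{X}Q+2g(\nabla w,\nabla Q)\ \ge\ \frac{2}{n}\bigl(Q-g(X,\nabla w)\bigr)^{2},
\]
which is \eqref{conseguenza X boch} specialised to $\alpha=\beta=K=0$.

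The crucial step is the localisation. Fix $R>0$ and set $L:=\sup_{B_{2R}(o)}\log(u+1)$, so that $L/R\to\Omega(u)$ as $R\to\infty$. Instead of a cut-off of the distance function $r$, one takes a cut-off $\Phi$ obtained by composing a fixed one-variable profile with $\log(u+1)$ on the scale $1/L$, so that $\Phi\equiv1$ on a region containing $B_{R}(o)$ and $\Phi$ is supported in $\{\log(u+1)\le 2L\}$ (if compactness of the support is needed, $\Phi$ is further multiplied by a distance cut-off of very large radius, whose gradient is then irrelevant). Since $u+1$ is again $X$-harmonic, one has $\Delta_{X}\log(u+1)=-|\nabla\log(u+1)|^{2}$, and hence $\nabla\Phi$ and $\Delta_{X}\Phi$ are expressed purely through $|\nabla\log(u+1)|^{2}\le Q$ and the derivatives of the profile --- with no comparison theorem and no dependence on $|X|$. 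The scale $1/L\sim(\Omega(u)R)^{-1}$ now plays the role that $1/R$ plays in the classical argument.

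Finally, the maximum principle. After smoothing $\Phi$ by Calabi's trick, $G:=\Phi Q$ is non-negative and attains its maximum at an interior point $x_{0}$, where $\nabla G=0$ and $\Delta_{X}G\le0$. Substituting the first relation into the second, combining with the Bochner lower bound for $\Delta_{X}Q$, expanding $(Q-g(X,\nabla w))^{2}$, and absorbing the cross-terms in $X$ by Cauchy--Schwarz and Young's inequality, one is led --- just as in the passage leading to \eqref{pre terzo grado} in the proof of Theorem~\ref{general gradient estimate for X-Lap}, but with $1/L$ and $1/L^{2}$ in place of $\sup|X|$ and $1/R$ --- to a polynomial inequality for $t=G(x_{0})^{1/2}$; solving it and recalling $\Phi\equiv1$ on $B_{R}(o)$ yields a bound of the shape $\sup_{B_{R}(o)}|\nabla\log u|^{2}\le C(n)\bigl(L/R+(L/R)^{2}+R^{-1}\bigr)$. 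Letting $R\to\infty$, and then letting the base point vary, gives $\sup_{M}|\nabla\log u|^{2}\le C(n)(\Omega(u)+\Omega(u)^{2})$. The main obstacle is precisely this localisation package: choosing $\Phi$ so that $G$ has an interior maximum while keeping $\Delta_{X}\Phi$ free of the comparison theorem, and running the bookkeeping that turns the error term into $L/R+(L/R)^{2}$. This is the device, due to Munteanu--Wang, by which the exponential growth rate of $u$ --- and not any bound on $X$ --- comes to control $|\nabla\log u|$.
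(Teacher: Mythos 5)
Your overall plan---rerun the Munteanu--Wang maximum-principle scheme with the $X$-Bochner formula in place of the $f$-Bochner formula---is the same strategy the paper follows, but two essential steps of your proposal do not work as described.

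First, the drift term in the Bochner step. Once you write $\vert\mathrm{Hess}(w)\vert^{2}\ge\frac{1}{n}(\Delta w)^{2}$ with $\Delta w=-Q+g(X,\nabla w)$, the expansion of $(Q-g(X,\nabla w))^{2}$ produces the cross term $-\frac{4}{n}Q\,g(X,\nabla w)\ge-\frac{4}{n}Q^{3/2}\vert X\vert$. In Theorem \ref{general gradient estimate for X-Lap} this is absorbed by Young's inequality precisely because $\vert X\vert\le\Lambda$ there; in Theorem \ref{X-Munteanu-Wang} there is no bound on $\vert X\vert$ at all, and your ``Cauchy--Schwarz and Young'' absorption has nothing to absorb it into. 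Your substitution ``$1/L$ in place of $\sup\vert X\vert$'' only makes sense for the cutoff contributions, not for this term, which has nothing to do with the cutoff. This is the central difficulty of the unbounded-drift case and the proposal leaves it untouched; the identity $g(X,\nabla w)=\Delta w+Q$ does not help (it is circular), so one must avoid producing the term at all --- for instance by working with $u^{\varepsilon}$ and discarding the Hessian term instead of tracing it, in the spirit of \cite{brighton2013liouville} and \cite{munteanu2011smooth}, letting the growth rate $\Omega(u)$ enter through the choice of $\varepsilon$.

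Second, the localisation. A cutoff $\Phi=\eta(\log(u+1)/L)$ satisfies $\vert\nabla\Phi\vert\le CQ^{1/2}/L$ and $-\Delta_{X}\Phi\le C(1/L+1/L^{2})Q$, so the error terms $\big(-\Delta_{X}\Phi+2\vert\nabla\Phi\vert^{2}/\Phi\big)Q$ are of order $Q^{2}(1/L+1/L^{2})$ --- the \emph{same} order as the good term $\frac{2}{n}Q^{2}$, not lower order. No factor $1/R$ appears anywhere in such a cutoff, so there is no mechanism producing a bound of the shape $L/R+(L/R)^{2}$; the scheme simply yields nothing. Moreover the set $\{\log(u+1)\le 2L\}$ contains $B_{2R}(o)$ and need not be compact, so the auxiliary distance cutoff you dismiss is unavoidable, and its $X$-Laplacian brings back $\Delta_{X}r$, for which Theorem \ref{X-comparison for vector field} is useless without a bound on $\vert X\vert$. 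The ingredient you are missing is exactly the one the paper names: the comparison theorem of \cite{catino2016analytic} (under $\mathrm{Ric}_{X}\ge0$, $\Delta_{X}r$ is non-increasing along minimal geodesics, hence bounded above outside $B_{\delta}(o)$ by a constant depending on $o$ and $\delta$ but not on $R$ or on $\sup\vert X\vert$). With that, the ordinary distance cutoff can be kept and its error terms vanish as $R\to\infty$; a $u$-dependent cutoff is neither needed nor sufficient.
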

\begin{proof}
    We just explain how to modify the proof in \cite{munteanu2011smooth} to extend the result to the non-gradient case. First, one has to substitute the gradient of the potential $f$ with the generic vector field $X$, and exploit the $X$-Bochner formula instead of the $f$-Bochner formula. Secondly, one must apply the comparison theorem in \cite{catino2016analytic} with $F(t)\equiv 0$, taking the constant $\delta$ small enough to ensure that the geodesic ball with center $p$ and radius $\delta$ is contained in the ball of radius $R$ for every $R$ big enough.
\end{proof}
\begin{remark}
    The growth condition of the solution is sharp: indeed, if we consider $\mathbb{R}^{n}$ with the standard metric and the vector field $X:=\frac{\partial}{\partial x_{1}}$, then $\mathrm{Ric}_{X}\ge0$ and the function $u(x):=e^{x_{1}}$ is a positive $X$-harmonic function of exponential growth (see the example in \cite{brighton2013liouville}).   
\end{remark}
\begin{remark}
    It is interesting to observe that if $\mathrm{Ric}_{X}\ge0$ and $X$ is bounded, then any positive $X$-harmonic function on $M$ is either constant or has exponential growth. This follows immediately from the global gradient estimate and Theorem \ref{X-Munteanu-Wang}
\end{remark}
\begin{remark}
    The comparison theorem in \cite{catino2016analytic} is a powerful tool, because it does not require any growth assumption on the vector field $X$. However, the constants depend heavily on the center of the ball that we consider, and this dependence would be maintained if we chose to use this result instead of Theorem \ref{X-comparison for vector field} in the proof of the gradient estimate.
\end{remark}
\noindent What we are going to do now is to improve the result in Theorem \ref{X-Munteanu-Wang} when the norm of the vector field $X$ decays at infinity, proving that, in this case, the Liouville property holds for a general positive $X$-harmonic functions. We have the following
\begin{theorem}\label{Liouville}
    Let $(M,g)$ be a complete, non-compact Riemannian manifold of dimension $n$, and let $X$ be a smooth vector field on $M$. Suppose that $\mathrm{Ric}_{X}\ge 0$ and that there exists a point $o\in M$ such that
    \begin{equation}
        \vert X\vert\le \Lambda(r(x)), \quad \forall x\in M,
    \end{equation}
    where $r(x)$ is the distance from $o$ and $\Lambda: [0,+\infty)\longrightarrow [0,+\infty)$ is a continuous functions satisfying
    \begin{equation*}
        \lim_{t\rightarrow+\infty}\Lambda(t) = 0.
    \end{equation*}
    Then, every positive, globally defined $X$-harmonic function on $M$ is constant.
\end{theorem}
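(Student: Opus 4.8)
The plan is to combine the local gradient estimate of Theorem~\ref{general gradient estimate for X-Lap} (in the $X$-harmonic case, i.e. $F\equiv 0$, so $\alpha=\beta=0$, and $K=0$) with the Munteanu--Wang-type Theorem~\ref{X-Munteanu-Wang}. Fix a positive $X$-harmonic function $u$ on $M$ and set $w:=\log u$. I would first record that $X$ is globally bounded: since $\Lambda$ is continuous on $[0,+\infty)$ and $\Lambda(t)\to 0$, it is bounded, say $|X|\le\Lambda_0$ on all of $M$.

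The core of the argument is to show that $|\nabla w|(x)\to 0$ as $r(x)\to+\infty$. Set $\widetilde\Lambda(\rho):=\sup_{t\ge\rho}\Lambda(t)$, which is non-increasing and still satisfies $\widetilde\Lambda(\rho)\to 0$ as $\rho\to+\infty$. For a point $x$ with $s:=r(x)$ large I would apply Theorem~\ref{general gradient estimate for X-Lap} on the ball $B_{2R}(x)$ with $R:=s/4$: every $y\in B_{2R}(x)=B_{s/2}(x)$ satisfies $r(y)\ge r(x)-d(x,y)\ge s/2$, hence $|X|(y)\le\Lambda(r(y))\le\widetilde\Lambda(s/2)$; since $\mathrm{Ric}_X\ge 0$ we take $K=0$, and evaluating the estimate at the center gives
\[
\frac{|\nabla u|^2}{u^2}(x)\le C(n)\Big(\widetilde\Lambda(s/2)^2+\frac{16}{s^2}\Big),
\]
whose right-hand side tends to $0$ as $s\to+\infty$. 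I expect this to be the only delicate step, and the key point is the \emph{choice of ball}: one must center the local estimate at the far-away point $x$ with radius proportional to $r(x)$, so that the whole of $B_{2R}(x)$ stays in the region where $|X|$ is small; a ball centered at the fixed point $o$ would always retain the possibly large values of $|X|$ near $o$ and yield nothing.

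Next I would turn this decay into the bound $\Omega(u)=0$, where $\Omega(u)=\limsup_{R\to+\infty}R^{-1}\sup_{B_R(o)}\log(u+1)$ is the quantity appearing in Theorem~\ref{X-Munteanu-Wang}. Given $\delta>0$, choose $T$ with $|\nabla w|<\delta$ outside $B_T(o)$; since $w$ is smooth, $|\nabla w|\le C_T$ on the compact ball $\overline{B_T(o)}$. For $x\in B_R(o)$, integrating $w$ along a minimizing unit-speed geodesic $\gamma$ from $o$ to $x$ (so that $r(\gamma(t))=t$) gives
\[
|w(x)-w(o)|\le\int_0^{r(x)}|\nabla w|(\gamma(t))\,dt\le C_T T+\delta R,
\]
hence $\log(u(x)+1)\le\log(u(o)+1)+C_T T+\delta R$. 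Dividing by $R$, taking the supremum over $B_R(o)$ and then $\limsup_{R\to+\infty}$, I obtain $\Omega(u)\le\delta$; as $\delta>0$ is arbitrary and $\Omega(u)\ge 0$, this forces $\Omega(u)=0$.

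Finally, since $\mathrm{Ric}_X\ge 0$, Theorem~\ref{X-Munteanu-Wang} yields $\sup_M|\nabla\log u|^2\le C(n)\big(\Omega(u)+\Omega(u)^2\big)=0$, so $\nabla u\equiv 0$ and $u$ is constant. Apart from the localization in the second paragraph, every step is routine: the passage from gradient decay to $\Omega(u)=0$ is an elementary integration along geodesics, and the conclusion is a direct citation of Theorem~\ref{X-Munteanu-Wang}.
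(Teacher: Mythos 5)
Your proposal is correct. The delicate step---re-centering the local gradient estimate of Theorem~\ref{general gradient estimate for X-Lap} at a far-away point $x$ with radius comparable to $r(x)$, so that $\vert X\vert$ is uniformly small on all of $B_{2R}(x)$ and hence $\vert\nabla\log u\vert(x)\to 0$ as $r(x)\to+\infty$---is exactly the first half of the paper's argument (the paper works on balls centered at points $x\notin B_{2R_k}(o)$ with radius $R_k$; you take $R=r(x)/4$; same idea, and indeed the paper's remark that the constant in the gradient estimate does not depend on the center is what makes this legitimate). Where you genuinely diverge is in upgrading the decay of $Q:=\vert\nabla\log u\vert^2$ to $Q\equiv 0$. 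The paper stays self-contained: by the $X$-Bochner formula and $\mathrm{Ric}_X\ge 0$, $Q$ satisfies $\Delta_X Q+2g(\nabla w,\nabla Q)\ge 0$, i.e.\ it is subharmonic for the drifted operator with bounded drift $Y=X-2\nabla w$; since $Q\ge 0$ vanishes at infinity it attains an interior maximum, and the strong maximum principle forces $Q$ to be constant, hence identically zero. You instead integrate the gradient decay along minimizing unit-speed geodesics from $o$ to get $\Omega(u)=0$ and then invoke Theorem~\ref{X-Munteanu-Wang}. Your route is sound: the splitting of the geodesic at radius $T$ is fine by Hopf--Rinow, and the passage from $\vert w(x)-w(o)\vert\le C_TT+\delta R$ to $\log(u(x)+1)\le\log(u(o)+1)+C_TT+\delta R$ is justified because $e^{c}u(o)+1\le e^{c}(u(o)+1)$ for $c\ge 0$. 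The trade-off is that you replace the strong maximum principle by a heavier black box, Theorem~\ref{X-Munteanu-Wang}, whose proof the paper only sketches by reference to Munteanu--Wang; the paper's ending relies only on tools developed in Sections 2--3. Both arguments are valid, and yours has the minor conceptual bonus of showing that under the hypotheses every positive $X$-harmonic function automatically has sub-exponential growth, so the weaker Liouville statement already suffices.
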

\begin{proof}
    Let $u\in C^{\infty}(M)$ be a positive $X$-harmonic function. As in the proof of Theorem \ref{general gradient estimate for X-Lap}, we introduce the quantities
    \begin{equation*}
        w:=\log u, \quad Q:=\vert\nabla w\vert^{2}.
    \end{equation*}
    From the fact that $\Lambda(t)$ goes to zero at infinity, we deduce that for every $k\in\mathbb{N}$ there exists $R_{k}>0$ such that
    \begin{equation}\label{lambda bound on k}
        \vert \Lambda(t)\vert\le\frac{1}{k+1} \quad \forall t\ge R_{k},
    \end{equation}
    and the sequence $\{R_{k}\}_{k\ge1}$ can be chosen to be increasing and divergent.  Fix $k\in\mathbb{N}$ and consider a point $x\in M\setminus B_{2R_{k}}(o)$; from (\ref{lambda bound on k}) and the fact that $\vert X\vert\le\Lambda(r)$, we deduce that
    \begin{equation}\label{X-bound on exterior of a ball}
        \vert X\vert \le \frac{1}{k+1} \quad \forall x \in M\setminus B_{R_{k}}(o). 
    \end{equation}
    In particular, since $B_{R_{k}}(x)\subseteq M\setminus B_{R_{k}}(o)$ for every $x\in M\setminus B_{2R_{k}}(o)$, we have that (\ref{X-bound on exterior of a ball}) holds on the ball $B_{R_{k}}(x)$. By applying estimate (\ref{gradient estimate}), we obtain
    \begin{equation*}
        Q(x)\le C(n)\bigg(\frac{1}{k^{2}}+\frac{1}{R_{k}^{2}}\bigg).
    \end{equation*}
    This holds for every $x\in M\setminus B_{2R_{k}}(o)$, and the right-hand side of the inequality goes to zero as $k$ tends to infinity: it follows that
    \begin{equation*}
        \lim_{x\rightarrow\infty}Q(x)=0.
    \end{equation*}
    Since $Q$ is non-negative and decays to zero at infinity, we deduce the existence of a maximum point for $Q$ in $M$.
    \\
    Arguing as in Theorem \ref{gradient estimate for X-Lap}, the $X$-Bochner formula and the non-negativity of the $X$-Ricci curvature imply that
    \begin{equation*}
        \Delta_{X}Q+2g(\nabla w,\nabla Q) \ge 0.
    \end{equation*}
    This means that $Q$ is a $Y$-subharmonic function, where $Y$ is the bounded vector field on $M$ given by 
    \begin{equation*}
        Y:=X-2\nabla w.
    \end{equation*}
    Since $Q$ is $Y$-subharmonic, by the maximum principle and the fact that it has an interior maximum, we deduce that $Q$ is constant. Moreover, since $Q$ goes to zero at infinity we have $Q\equiv0$, so $\nabla u\equiv0$, and $u$ is constant.
\end{proof}
\begin{remark}
    An example of Riemannian manifold $(M,g)$ with a vector field $X$ satisfying the hypothesis of Theorem \ref{Liouville} can be constructed by taking a manifold with positive Ricci curvature and a compactly supported vector field $X$ such that the norm of $\mathcal{L}_{X}g$ is small. For an example where the vector field does not have compact support, we refer to the Appendix.
\end{remark}
\begin{remark}
    We observe that there are no condition on the decay at infinity of the norm of $X$, thus the function $\Lambda$ need not to be decreasing or to have a particular rate of decay at infinity. 
\end{remark}
\noindent In summary, for what concern the gradient estimate (\ref{global gradient estimate for X-Lap}) and the Liouville theorem for positive $X$-harmonic functions, it happens that:
\begin{itemize}
    \item if $X$ is bounded and its norm decays to zero at infinity, then the manifold supports the gradient estimate (\ref{global gradient estimate for X-Lap}) and the Liouville property;
    \item if $X$ is bounded but its norm does not decay, then the manifold supports the gradient estimate (\ref{global gradient estimate for X-Lap}), and in general the Liouville property holds only for positive solutions with sub-exponential growth;
    \item if $X$ is unbounded, then the Liouville property holds only for positive solutions with sub-exponential growth, and in general the manifold does not support the gradient estimate (\ref{global gradient estimate for X-Lap}).
\end{itemize}

\subsection*{Acknowledgements} The author would like to thank professor Stefano Pigola and professor Giona Veronelli for the several discussions and
the support during the preparation of the present work. The author is member of GNAMPA-INdAM.

\section*{Appendix A}
In this appendix, we give an example of a complete Riemannian manifold $(M,g)$ with a vector field $X$ such that $\mathrm{Ric}_{X}\ge0$ and $\lim_{x\rightarrow\infty}\vert X\vert =0$.
\\
Consider the paraboloid $M:=\{(x,y,z)\in\mathbb{R}^{3}\mid z=x^{2}+y^{2}\}\subseteq\mathbb{R}^{3}$, let $\iota$ be the inclusion of $M$ in $\mathbb{R}^{3}$, and equip $M$ with the induced metric $g=\iota^{*}g_{\mathbb{R}^{3}}$. We have the global parameterization $\varphi:\mathbb{R}^{2}\longrightarrow \mathbb{R}^{3}$ given by 
\begin{equation*}
    \varphi(x,y):=(x,y,f(x,y)),
\end{equation*}
where $f\in C^{\infty}(\mathbb{R}^{2})$ is the function
\begin{equation*}
    f(x,y)=x^{2}+y^{2}.
\end{equation*}
On this chart, the metric can be written as
\begin{equation*}
    g = 
    \begin{pmatrix}
       1+4x^{2} & 4xy\\
       4xy & 1+4y^{2}
    \end{pmatrix},
\end{equation*}
and the inverse is
\begin{equation*}
    g = \frac{1}{1+4x^{2}+4y^{2}}
    \begin{pmatrix}
       1+4y^{2} & -4xy\\
       -4xy & 1+4x^{2}
    \end{pmatrix}
\end{equation*}
(note that we are identifying the symmetric 2-tensors with their representations on the global chart). The Gauss curvature is 
\begin{equation*}
    K(x,y)=\frac{f_{xx}f_{yy}-f_{xy}^{2}}{(1+f_{x}^{2}+f_{y}^{2})^{2}}=\frac{4}{(1+4x^{2}+4y^{2})^{2}},
\end{equation*}
so the Ricci curvature has the form
\begin{equation*}
    \mathrm{Ric}=Kg=\frac{4}{(1+4x^{2}+4y^{2})^{2}}
    \begin{pmatrix}
       1+4x^{2} & 4xy\\
       4xy & 1+4y^{2}
    \end{pmatrix}.
\end{equation*}
We compute the Christoffel symbols: if we denote $x^{1}=x$ and $x^{2}=y$, we have
\begin{equation*}
    \Gamma^{k}_{ij}=\frac{1}{2}g^{kt}\bigg(\frac{\partial g_{it}}{\partial x^{j}}+\frac{\partial g_{jt}}{\partial x^{i}}-\frac{\partial g_{ij}}{\partial x^{t}}\bigg),
\end{equation*}
and a long computation shows that
\begin{align*}
    &\Gamma^{1}_{11}=\frac{4x}{1+4x^{2}+4y^{2}}, \\
    &\Gamma^{1}_{12}=\Gamma^{1}_{21}=0, \\
    &\Gamma^{1}_{22}=\frac{4x}{1+4x^{2}+4y^{2}}, \\
    &\Gamma^{2}_{11}=\frac{4y}{1+4x^{2}+4y^{2}}, \\
    &\Gamma^{2}_{12}=\Gamma^{2}_{21}=0, \\
    &\Gamma^{2}_{22}=\frac{4y}{1+4x^{2}+4y^{2}}. \\
\end{align*}
Now, let $\Phi\in C^{\infty}(\Sigma)$ be the function
\begin{equation*}
    \Phi(x,y):=\frac{1}{2}\int_{0}^{x^{2}+y^{2}}\frac{1}{1+4t^{2}}\ \textnormal{d}t,
\end{equation*}
then the Euclidean gradient is 
\begin{equation*}
    \nabla^{\mathbb{R}^{2}}\Phi = \frac{1}{2}\frac{(x,y)}{1+4x^{2}+4y^{2}}
\end{equation*}
and the Euclidean Hessian is
\begin{equation*}
    \mathrm{Hess}_{\mathbb{R}^{2}}(\Phi)=\frac{1}{(1+4x^{2}+4y^{2})^{2}}
    \begin{pmatrix}
       1-4x^{2}+4y^{2} & -8xy\\
       -8xy & 1+4x^{2}-4y^{2}
    \end{pmatrix}.
\end{equation*}
This implies that
\begin{equation*}
    \begin{split}
        \mathrm{Hess}(\Phi) &=  \mathrm{Hess}_{\mathbb{R}^{2}}(\Phi) - \bigg(\Gamma_{ij}^{k}\frac{\partial\Phi}{\partial x^{k}}\bigg)_{i,j=1,2} \\
        &=\frac{1}{(1+4x^{2}+4y^{2})^{2}}\begin{pmatrix}
       1-8x^{2} & -8xy\\
       -8xy& 1-8y^{2}
    \end{pmatrix}.
    \end{split}
\end{equation*}
If we consider the smooth vector field $X:=\nabla\Phi$, the previous computations tell that
\begin{equation*}
    \begin{split}
        \mathrm{Ric}_{X}=\mathrm{Ric}+\frac{1}{2}\mathcal{L}_{X}g =  \mathrm{Ric}+\mathrm{Hess}(\Phi) =\frac{4}{(1+4x^{2}+4y^{2})^{2}}\begin{pmatrix}
        1+2x^{2}& 2xy\\
        2xy& 1+2y^{2}
    \end{pmatrix},
    \end{split}
\end{equation*}
which is a non-negative matrix. 
\\
Finally, we have that
\begin{equation*}
    \begin{split}
        \vert X\vert^{2} &= \vert\nabla\Phi\vert^{2}=g(\nabla\Phi,\nabla\Phi)=(\nabla^{\mathbb{R}^{3}}\Phi)^{T} g^{-1}\nabla^{\mathbb{R}^{3}}\Phi \\
        &= \frac{1}{4}\frac{1}{(1+4x^{2}+4y^{2})^{3}}(x,y)
        \begin{pmatrix}
        1+4y^{2} & -4xy\\
        -4xy& 1+4x^{2}
    \end{pmatrix}
    \begin{pmatrix}
        x \\
        y
    \end{pmatrix} \\
    &=\frac{1}{4}\frac{x^{2}+y^{2}}{(1+4x^{2}+4y^{2})^{3}}
    \end{split}
\end{equation*}
hence 
\begin{equation*}
    \lim_{(x,y)\rightarrow\infty}\vert X\vert=0.
\end{equation*}

\bibliographystyle{abbrv}
\bibliography{bibliography}

\end{document}